\documentclass[dvipdfmx]{article}

\usepackage[margin=30truemm]{geometry}
\usepackage{indentfirst}
\usepackage[svgnames]{xcolor}
\usepackage{tikz}
\usepackage{amsmath,amssymb,amsthm,bm,ytableau,setspace}
\usepackage[enableskew]{youngtab} 

\DeclareMathOperator{\GL}{GL}
\DeclareMathOperator{\wt}{wt}
\DeclareMathOperator{\RR}{RR}

\ytableausetup{centertableaux}
\theoremstyle{definition} 
\newtheorem{theorem}{Theorem}[section]
\newtheorem{proposition}[theorem]{Proposition}
\newtheorem{definition}[theorem]{Defintion}
\newtheorem{remark}[theorem]{Remark}
\newtheorem{example}[theorem]{Example}

\newtheorem{lemma}[theorem]{Lemma}

\title{The Pieri formulas and the Littlewood-Richardson rule for Schur multiple zeta functions}

\author{Shutaro Nakaoka}
\begin{document}
\maketitle

\begin{abstract}
We prove the Pieri formulas for Schur multiple zeta functions, which are generalizations of the Pieri formulas proved by Nakasuji and Takeda for hook type Schur multiple zeta functions. Moreover, we also prove the Littlewood-Richardson rule for Schur multiple zeta functions.

In the course of their proofs, we regard the `truncated' version of Schur multiple zeta functions as series over $\GL(N)$ crystals to arrive at the Littlewood-Richardson rule for the Schur multiple zeta functions.

\end{abstract}

\section{Introduction}

Nakasuji-Phuksuwan-Yamasaki \cite{1} introduced the \textit{Schur multiple zeta functions} as a combinatorial generalization of multiple zeta functions. It is known that some Schur multiple zeta values have the properties inherited from the theory of multiple zeta values such as duality, shuffle product formula, and sum formula (see \cite{3,4,5}). On the other hand, Schur multiple zeta functions are variant of Schur functions, so one can expect similar properties to these of Schur functions. For example, Nakasuji-Phuksuwan-Yamasaki \cite{1} proved the Jacobi-Trudi formulas, the Giambelli formula, and the dual Cauchy formula for Schur multiple zeta functions.

The purpose of this paper is to generalize the Pieri formulas for hook type Schur multiple zeta functions proved by Nakasuji-Takeda \cite{2}. To that end, we first recall the definition of the Schur multiple zeta functions.

Let $X$ be a set and $\lambda$ be a partition. A Young tableau $T=(t_{ij})$ of shape $\lambda$ over $X$ is a filling of Young diagram obtained by putting $t_{ij}\in X$ into the $(i,j)$ box in the Young diagram of shape $\lambda$. Let $T(\lambda,X)$ denote the set of all Young tableaux of shape $\lambda$ over $X$. For a partition $\lambda=(\lambda_1,\ldots,\lambda_l)\;(\lambda_1\ge\lambda_2\ge\cdots\ge\lambda_l\ge0)$, the length of $\lambda$ is denoted by $l(\lambda)$, and the weight $|\lambda|$ of $\lambda$ is defined as $|\lambda|=\lambda_1+\cdots+\lambda_l$. For a partition $\lambda$, we define $D(\lambda)$ as follows:
\[
D(\lambda):=\{(i,j)\in \mathbb{Z}^2 \mid 1\leq i \leq l(\lambda), 1\leq j \leq \lambda_i \}.
\]
The conjugate of $\lambda$ is denoted by $\lambda'$.

Let $\lambda = (\lambda_1, \lambda_2, \dots, \lambda_k)$ and $\mu = (\mu_1, \mu_2, \dots, \mu_l)$ be two partitions such that $\mu_i \leq \lambda_i$ for all $i$. The skew partition $\lambda / \mu$ is defined as the set of boxes in the Young diagram of $\lambda$ that are not in the Young diagram of $\mu$. 

The weight $|\lambda/\mu|$ of a skew partition $\lambda/\mu$ is defined by
\[
|\lambda/\mu|=|\lambda|-|\mu|.
\]

A semistandard Young tableau of shape $\lambda/\mu$ is a filling of $\lambda/\mu$ with positive integers, subject to the following conditions:
\begin{enumerate}
    \item The entries in each row are weakly increasing (i.e., non-decreasing).
    \item The entries in each column are strictly increasing.
\end{enumerate}

\begin{definition}[Schur multiple zeta function, \cite{1}]
For a partition $\lambda$ and $\bm{s}=(s_{ij})\in T(\lambda,\mathbb{C})$, the Schur multiple zeta function is defined to be
\[
\zeta_{\lambda}(\bm{s})=\sum_{M\in SSYT(\lambda)}\frac{1}{M^{\bm{s}}}.
\]
where $SSYT(\lambda)\,( \subset\hspace{-3pt}T(\lambda, \mathbb{N}))$ denotes the set of all semistandard Young tableaux of shape $\lambda$, and $M^{\bm{s}}$ is defined as $M^{\bm{s}}=\prod_{(i,j)\in D(\lambda)}m_{ij}^{s_{ij}}$ for $M=(m_{ ij})\in SSYT(\lambda)$.
\end{definition}

Nakasuji and Takeda \cite{2} formulate the Pieri formulas for some special types of Schur multiple zeta functions by using `pushing rule' defined as follows:

\begin{definition}
Let $\lambda=(\lambda_1,\ldots,\lambda_l)$ be a partition. For a positive integer $n$, define $\mathcal{E}(\lambda,n)$ as the set of all subsets $K$ of $\mathbb{Z}_{>0}$ such that $\#K=n$ and the sequence $\lambda_K=((\lambda_K)_1,(\lambda_K)_2,(\lambda_K)_3,\ldots)$ given by
\[
(\lambda_K)_k=
\begin{cases}
 \lambda_k+1 & (k\in K) \\
 \lambda_k   & (k\notin K),
\end{cases}
\]
forms a partition. For a partition $\mu$ and a positive integer $m$, we define $\mathcal{H}(\mu,m)$ as $\mathcal{E}(\mu',m)$. For $J\in \mathcal{H}(\mu,m)$, the partition $((\mu')_J)'$ is denoted by $\mu^J$.

\end{definition}

\begin{definition}[{\cite[2.5]{2}}]
Let $\lambda$ be a partition and let $m$ be a positive integer. Let $J=\{\alpha_1,\ldots,\alpha_m\}\;(\alpha_1<\cdots<\alpha_m)$ be an element of $\mathcal{H}(\lambda,m)$. 
For $\bm{s}\in T(\lambda,\mathbb{C})$
and
$\bm{t}=
\begin{ytableau}
 t_1 & t_2 & \cdots & t_m
 \end{ytableau}
\in T((m),\mathbb{C})$, let $\bm{u}^J(\bm{s},\bm{t})$ denote the Young tableau over $\mathbb{C}$ of shape $\lambda^J$ defined by putting $t_j$ into the $(1,\alpha_j)$ box for $j\in J$ and putting $s_{ij}$ into the $(i,j)$ box if $j\notin J$ and $(i+1,j)$ box if $j\in J$ for $(i,j)\in D(\lambda)$.
\end{definition}

\begin{definition}[{\cite[2.5]{2}}]
Let $\lambda$ be a partition and let $n$ be a positive integer. Let $K=\{\beta_1,\cdots,\beta_n\}\;(\beta_1<\cdots<\beta_n)$ be an element of $\mathcal{E}(\lambda,n)$. For $
\bm{s}=
\begin{ytableau}
 s_1 \\
 s_2 \\
 \vdots \\
 s_n
\end{ytableau}
$
and
$\bm{t}\in T(\lambda,\mathbb{C})
$, let $\bm{u}_K(\bm{s},\bm{t})$ denote the Young tableau over $\mathbb{C}$ of shape $\lambda_K$ defined by putting $s_k$ into the $(\beta_k,1)$ box for $k\in K$ and putting $t_{ij}$ into the $(i,j)$ box if $i\notin K$ and into the $(i,j+1)$ box if $i\in K$ for $(i,j) \in D(\lambda)$.

\end{definition}

The following result is the main theorem of this paper, that can be understood as the Pieri formulas for Schur multiple zeta functions:

\begin{theorem}[= Theorem \ref{thm:i1}, Theorem \ref{thm:i2}]
\textit{Let $\lambda$ be a partition of length $p$ such that $\lambda_1=r$. Let $\lambda'=(\lambda_1',\ldots, \lambda_r')$ be its conjugate. Then, the following items hold:
\item[($H$-type)]
Let $\bm{s}\in T(\lambda,\mathbb{C})$
and
$\bm{t}=
\begin{ytableau}
 t_1 & t_2 & \cdots & t_m
\end{ytableau}
\in T((m),\mathbb{C})$. Suppose that the real parts of all $s_{ij},t_k$ are greater than or equal to $1$ and the real parts of 
\[
t_1,\ldots,t_{\min\{m,r-1\}},s_{11},\ldots,s_{{\lambda'}_2 1},s_{12},\ldots,s_{{\lambda'}_2 2},s_{13},\ldots,s_{{\lambda'}_3 3},\ldots,s_{1r},\ldots,s_{{\lambda'}_r r},t_m,s_{{\lambda'}_1 1}
\]
are greater than $1$. Then, the following equality holds:
\[\sum_{sym}\zeta_{\lambda}(\bm{s})\zeta_{(m)}(\bm{t})=\sum_{sym}\sum_{J\in \mathcal{H}(\lambda,m)} \zeta_{\lambda^J}(\bm{u}^J(\bm{s},\bm{t})).\]
Here $\displaystyle\sum_{sym}$ means the summation over the permutation of
\[
\{t_1,\ldots,t_{\min\{m,r-1\}},s_{11},\ldots,s_{{\lambda'}_2 1},s_{12},\ldots,s_{{\lambda'}_2 2},s_{13},\ldots,s_{{\lambda'}_3 3},\ldots,s_{1r},\ldots,s_{{\lambda'}_r r}\}
\]
as indeterminates.
\item[($E$-type)]
Let $\bm{s}=
\begin{ytableau}
 s_1 \\
 s_2 \\
 \vdots \\
 s_n
\end{ytableau}
$
and
$\bm{t}\in T(\lambda,\mathbb{C})
$. Suppose that the real parts of all $t_{ij},s_k$ are greater than or equal to $1$ and real parts of 
\[
s_1,\ldots,s_{\min\{n,p-1\}},t_{11},\ldots,t_{1\lambda_2},t_{21},\ldots,t_{2\lambda_2},t_{31},\ldots,t_{3\lambda_3},\ldots,t_{p1},\ldots,t_{p\lambda_p},s_n,t_{1\lambda_1}
\]
are greater than $1$. Then the following equality holds:
\[
\sum_{sym}\zeta_{(1^n)}(\bm{s})\zeta_{\lambda}(\bm{t})=\sum_{sym}\sum_{K\in \mathcal{E}(\lambda,n)}\zeta_{\lambda_K}(\bm{u}_K(\bm{s},\bm{t})).
\]
Here $\displaystyle\sum_{sym}$ means the summation over the permutation of
\[
\{s_1,\ldots,s_{\min\{n,p-1\}},t_{11},\ldots,t_{1\lambda_2 },t_{21},\ldots,t_{2\lambda_2},t_{31},\ldots,t_{3\lambda_3},\ldots,t_{p1},\ldots,t_{p\lambda_p}\}
\]
as indeterminates.
}
\end{theorem}

Also, we prove the following Littlewood-Richardson rule:

\begin{theorem}[= Theorem \ref{thm:lr}]
\textit{Let $\mu$ and $\nu$ be partitions and let $\bm{s}\in T(\mu,\mathbb{C})$, $\bm{t}\in T(\nu,\mathbb{C})$. Suppose that
the real parts of all $t_{ij}$ , $s_{kl}$ are greater than 1. For a skew partition $\lambda/\mu$ such that $|\lambda/\mu|=|\nu|$, we fix a filling $\bm{u}_{\lambda}(\bm{s},\bm{t})$ of $D(\lambda)$ with $\{s_{ij} \mid (i,j)\in D(\mu)\}\cup\{t_{kl} \mid (k,l)\in D(\nu)\}$ (as indeterminates). Then, the following holds:
\[
\sum_{sym}\zeta_{\mu}(\bm{s})\zeta_{\nu}(\bm{t})=\sum_{sym}\sum_{\lambda}c_{\mu\nu}^{\lambda}\zeta_{\lambda}(\bm{u}_{\lambda}(\bm{s},\bm{t})).
\]
Here $\displaystyle\sum_{sym}$ means the summation over the permutation of $\{s_{ij} \mid (i,j)\in D(\mu)\}\cup\{t_{kl} \mid (k,l)\in D(\nu)\}$ 
as indeterminates.
}

\end{theorem}

\section{Review on crystals}

We review on crystals in accordance with \cite{6}. We refer basics on partitions and Young diagram to \cite[Chapter 3]{6}

We fix a based root datum $(X^{*},R,\Pi,X_{*},R^{\vee},\Pi^{\vee})$, where $X^{*}$ is the weight lattice, $X_{*}$ is the coweight lattice, $R\subset X^{*}$ is the set of roots, $R^{\vee}\subset X_{*}$ is the set of coroots, $\Pi=\{\alpha_i\mid i\in I\}\subset R$ is the set of simple roots indexed by $I$, and $\Pi^{\vee}=\{\alpha_i^{\vee}\mid i\in I\}\subset R^{\vee}$ is the set of simple coroots. We denote by $\langle\cdot,\cdot\rangle : X^{*}\times X_{*}\to \mathbb{Z}$ the natural paring.

\begin{definition}[{\cite[Definition 2.13]{6}}]
A Kashiwara crystal is a nonempty set $\mathcal{B}$ together with maps
\[
e_i, f_i: \mathcal{B}\to \mathcal{B}\sqcup \{0\},
\epsilon_i, \phi_i: \mathcal{B}\to \mathbb{Z}\sqcup\{-\infty\}\;(i\in I),
\wt: \mathcal{B}\to X^{*},
\]
satisfying the following conditions:
\begin{enumerate}
\item[A1.] For $x,y \in \mathcal{B}$ and $i\in I$, we have $e_i(x)=y$ if and only if $f_i(y)=x$. In this case, we have
\[
\wt(y)=\wt(x)+\alpha_i, \epsilon_i(y)=\epsilon_i(x)-1, \phi_i(y)=\phi_i(x)+1.
\]
\item[A2.] We have
\[
\phi_i(x)=\langle \wt(x),\alpha_i^{\vee}\rangle+\epsilon_i(x)
\]
for all $x\in \mathcal{B}$ and $i\in I$. If $\phi_i(x)=-\infty$, then 
we have $e_i(x)=f_i(x)=0$.
\end{enumerate}
\end{definition}

If $\mathcal{B}$ is a crystal, we associate a directed graph with vertices consisting of all elements of $\mathcal{B}$ and edges labelled by $i\in I$. If $f_i(x)=y$, then we draw an edge labeled $i$ from $x$ to $y$. If we illustrate it in a graph, then it would look like following diagram:
\[
x\xrightarrow{i}y.
\]This is called the \textit{crystal graph} of $\mathcal{B}$. 

\begin{definition}[{\cite[2,2]{6}}]
A crystal $\mathcal{B}$ is called \textit{seminormal} if
\[
\phi_i(x)=\max\{k\in \mathbb{Z}_{\geq0} \mid f_i^k(x)\neq 0\}\;\text{and}\;\epsilon_i(x)=\max\{k\in \mathbb{Z}_{\geq0} \mid e_i^k\neq 0\}
\]
holds for all $i\in I$.
\end{definition}

\begin{example}[{\cite[Example 2.19]{6}}]
We assume that $I=\{1,\ldots,N-1\}$, $X^{*}=X_{*}=\mathbb{Z}^{N}$, $\langle\bm{e}_i,\bm{e}_j\rangle=\delta_{ij} \,(i,j\in I)$, and $\alpha_i=\alpha_i^{\vee}=\bm{e}_i-\bm{e}_{i+1}\, (i\in I)$ where $\bm{e}_i=(0,\ldots,1,\ldots,0)^T$ ($1$ in the $i$'th place).

There is a crystal whose crystal graph represented by the following diagram:
\[
\boxed{1}\xrightarrow{1}	\boxed{2}\xrightarrow{2} \cdots 
\xrightarrow{N-1}\boxed{N}\,.
\]
We define the weight by $\wt(\,\boxed{i}\,)=\bm{e}_i$. The maps $\phi_i$ and $\epsilon_i$ are uniquely determined if we require this crystal to be seminormal. We denote this crystal as $\mathbb{B}$.
\end{example}

\begin{definition}[{\cite[2.3]{6}}]
Two crystals $\mathcal{B}$ and $\mathcal{C}$ are called \textit{isomorphic} if there is a bijection $\psi:\mathcal{B}\sqcup\{0\} \to \mathcal{C}\sqcup\{0\}$ such that 
\begin{enumerate}
\item $\psi(0)=0$;
\item $\psi(e_i(b))=e_i(\psi(b))$ for all $b\in \mathcal{B}$ and $i\in I$;
\item $\psi(f_i(b))=f_i(\psi(b))$ for all $b\in \mathcal{B}$ and $i\in I$;
\item $\epsilon_i(\psi(b))=\epsilon_i(b)$ for all $b\in \mathcal{B}$ and $i\in I$;
\item $\phi_i(\psi(b))=\phi_i(b)$ for all $b\in \mathcal{B}$ and $i\in I$;
\item $\wt(\psi(b))=\wt(b)$ for all $b\in\mathcal{B}$.
\end{enumerate}
Note that in the situation described above, we have $\psi(b)\neq 0$ for $b\in \mathcal{B}$ since $\psi$ is an injection and $\psi(0)=0$.
\end{definition}

\begin{definition}[{\cite[2.3]{6}}]
Let $\mathcal{B}$ and $\mathcal{C}$ be crystals. We define the tensor product crystal $\mathcal{B}\otimes \mathcal{C}$ as follows:

As a set, $\mathcal{B}\otimes\mathcal{C}$ is $\mathcal{B}\times\mathcal{C}$, and we denote $(x,y)\in \mathcal{B}\times\mathcal{C}$ by $x\otimes y$. We define the weight by $\wt(x\otimes y)=\wt(x)+\wt(y)$ and the actions of Kashiwara operators $e_i, f_i:\mathcal{B}\otimes\mathcal{C}\to(\mathcal{B}\otimes\mathcal{C})\sqcup\{0\}\;(i\in I)$ by
\[
f_i(x\otimes y)=\left\{
\begin{array}{ll}
f_i(x)\otimes y & \mathrm{if}\; \phi_i(y) \leq \epsilon_i(x)\\
x\otimes f_i(y) & \mathrm{if} \;\phi_i(y)>\epsilon_i(x),
\end{array}
\right.
\]
\[
e_i(x\otimes y)=\left\{
\begin{array}{ll}
e_i(x)\otimes y & \mathrm{if}\; \phi_i(y)< \epsilon_i(x)\\
x\otimes e_i(y) & \mathrm{if}\; \phi_i(y) \geq \epsilon_i(x).
\end{array}
\right.
\]
We understand that $x\otimes 0=0\otimes x=0$. We also set 
\[
\phi_i(x\otimes y)=\max\{\phi_i(x), \phi_i(y)+\langle\wt(x), \alpha_i^{\vee}\rangle\}
\]
and
\[
\epsilon_i(x\otimes y)=\max\{\epsilon_i(y), \epsilon_i(x)-\langle\wt(y), \alpha_i^{\vee}\rangle\}.
\]
\end{definition}

\begin{proposition}[{\cite[Proposition 2.29]{6}}]
\textit{The tensor product $\mathcal{B}\otimes \mathcal{C}$ is a crystal. If $\mathcal{B}$ and $\mathcal{C}$ are seminormal, then so is $\mathcal{B}\otimes \mathcal{C}$. }
\end{proposition}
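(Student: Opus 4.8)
The plan is to verify the two crystal axioms A1 and A2 for $\mathcal{B}\otimes\mathcal{C}$ directly from the defining formulas for $\wt$, $e_i$, $f_i$, $\epsilon_i$, $\phi_i$ on the tensor product, and to treat seminormality separately afterwards. I would dispose of A2 first, since it is purely algebraic: substituting the definitions and using A2 for $\mathcal{B}$ and for $\mathcal{C}$ together with $\mathbb{Z}$-bilinearity of $\langle\cdot,\cdot\rangle$, one rewrites $\langle\wt(x\otimes y),\alpha_i^\vee\rangle+\epsilon_i(x\otimes y)$ first as $\langle\wt(x),\alpha_i^\vee\rangle+\max\{\phi_i(y),\epsilon_i(x)\}$ and then as $\max\{\phi_i(y)+\langle\wt(x),\alpha_i^\vee\rangle,\phi_i(x)\}=\phi_i(x\otimes y)$. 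The degenerate clause (if $\phi_i(x\otimes y)=-\infty$ then $e_i=f_i=0$) then follows from the corresponding clause for the factors.

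The substance lies in axiom A1, which I would prove by a case analysis governed by the comparison of $\phi_i(y)$ with $\epsilon_i(x)$. Suppose $f_i(x\otimes y)=x'\otimes y'\neq 0$; I must show $e_i(x'\otimes y')=x\otimes y$ together with the prescribed shifts of $\wt$, $\epsilon_i$, $\phi_i$. If $\phi_i(y)\le\epsilon_i(x)$ then $x'=f_i(x)$ and $y'=y$; applying A1 inside $\mathcal{B}$ gives $\epsilon_i(f_i(x))=\epsilon_i(x)+1$, so $\phi_i(y')=\phi_i(y)\le\epsilon_i(x)<\epsilon_i(x')$, which is exactly the branch $\phi_i(y')<\epsilon_i(x')$ in the definition of $e_i$, whence $e_i(x'\otimes y')=e_i(f_i(x))\otimes y=x\otimes y$. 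If instead $\phi_i(y)>\epsilon_i(x)$, then $x'=x$, $y'=f_i(y)$, and since the $\phi_i$ take integer values the strict inequality forces $\phi_i(f_i(y))=\phi_i(y)-1\ge\epsilon_i(x)$, which is the branch $\phi_i(y')\ge\epsilon_i(x')$ for $e_i$, again returning $x\otimes y$. The point to emphasize is that the mismatch between the \emph{non-strict} threshold $\phi_i(y)\le\epsilon_i(x)$ for $f_i$ and the \emph{strict} threshold $\phi_i(y)<\epsilon_i(x)$ for $e_i$ is compensated exactly by the $\pm1$ shifts of A1, so that $e_i$ and $f_i$ remain mutually inverse across the boundary. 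The weight shift is immediate, and the shifts $\epsilon_i(x'\otimes y')=\epsilon_i(x\otimes y)+1$ and $\phi_i(x'\otimes y')=\phi_i(x\otimes y)-1$ follow by checking in each case which argument of the $\max$ defining the tensor $\epsilon_i,\phi_i$ is active; for instance, when $\phi_i(y)\le\epsilon_i(x)$ this very inequality guarantees that the second argument dominates in $\epsilon_i(x\otimes y)$, so the shift passes through cleanly.

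For seminormality I would show that iterating $f_i$ on $x\otimes y$ has completely explicit behaviour. Using seminormality of $\mathcal{C}$, $f_i$ acts on the right tensorand, lowering $\phi_i$ of the right factor by one at each step, exactly until that value descends to $\epsilon_i(x)$; thereafter, because each application of $f_i$ to the left factor raises $\epsilon_i$ of the left factor (so the governing inequality is never violated again), $f_i$ acts on the left tensorand until that factor is annihilated. The total number of nonzero iterates is therefore $\max\{0,\phi_i(y)-\epsilon_i(x)\}+\phi_i(x)$, and a short computation with $\langle\wt(x),\alpha_i^\vee\rangle=\phi_i(x)-\epsilon_i(x)$ identifies this quantity with $\max\{\phi_i(x),\phi_i(y)+\langle\wt(x),\alpha_i^\vee\rangle\}=\phi_i(x\otimes y)$. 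The statement for $\epsilon_i$ follows by the symmetric argument in which $e_i$ acts first on the left tensorand.

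I expect the main obstacle to be the bookkeeping in axiom A1: in each of the two branches one must track not only that $e_i$ inverts $f_i$, but also that the correct argument of the $\max$ formulas for $\epsilon_i(x\otimes y)$ and $\phi_i(x\otimes y)$ is selected, so that the integer shifts demanded by A1 are reproduced. Once the strict-versus-non-strict boundary is understood as the mechanism keeping $e_i$ and $f_i$ inverse, both the remaining A1 verification and the seminormality count reduce to routine computation.
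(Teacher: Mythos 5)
Your proof is correct. Note, though, that the paper itself contains no proof of this proposition: it is imported verbatim from \cite[Proposition 2.29]{6}, and your argument---direct verification of axiom A2 by the $\max$-manipulation, the two-case check of axiom A1 governed by the comparison of $\phi_i(y)$ with $\epsilon_i(x)$, and the counting argument $\max\{0,\phi_i(y)-\epsilon_i(x)\}+\phi_i(x)=\phi_i(x\otimes y)$ for seminormality---is essentially the standard verification given in that reference. The only point to tighten is that A1 is an equivalence: having shown that $f_i(x\otimes y)=x'\otimes y'\neq 0$ implies $e_i(x'\otimes y')=x\otimes y$, you should also record the mirror-image case analysis showing that $e_i(x\otimes y)\neq 0$ implies applying $f_i$ returns $x\otimes y$; it is the same computation with the strict and non-strict thresholds exchanged, but it is formally a separate check.
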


For a partition $\lambda=(\lambda_1,\ldots,\lambda_l)\;(\lambda_1\ge\lambda_2\ge\cdots\ge\lambda_l\ge0)$, the length of $\lambda$ is denoted by $l(\lambda)$ and the weight $|\lambda$ of $\lambda$ is defined as $|\lambda|=\lambda_1+\cdots+\lambda_l$.

Let $\lambda = (\lambda_1, \lambda_2, \dots, \lambda_k)$ and $\mu = (\mu_1, \mu_2, \dots, \mu_l)$ be two partitions such that $\mu_i \leq \lambda_i$ for all $i$. The skew partition $\lambda / \mu$ is the set of boxes in the Young diagram of $\lambda$ that are not in the Young diagram of $\mu$. This can be expressed as:

\[
\lambda / \mu = \{ (i,j) \mid 1 \leq i \leq k, \, \mu_i < j \leq \lambda_i \}.
\]

The weight of the skew partition $\lambda/\mu$ is given by the difference of the weights of the two partitions:
\[
|\lambda/\mu|=|\lambda|-|\mu|.
\]

A semistandard Young tableau of shape $\lambda/\mu$ is a filling of $\lambda/\mu$ with positive integers, subject to the following conditions:
\begin{enumerate}
    \item The entries in each row are weakly increasing (i.e., non-decreasing).
    \item The entries in each column are strictly increasing.
\end{enumerate}

Let $\lambda$ be a partition such that $l(\lambda)\leq N$. We write $\mathcal{B}_{\lambda}$ to be the set of all semistandard tableaux of shape $\lambda$ in the alphabet $\{1,\ldots,N\}$. We also denote it by $\mathcal{B}_{\lambda}^{(N)}$. If $T\in \mathcal{B}_{\lambda}$, we define $\RR(T)\in \mathbb{B}^{\otimes |\lambda|}$ by reading each row of $T$ in order, and we take the rows in order from the bottom to the top. For example, if
\[T=
\begin{ytableau}
 1  & 1 & 2 \\
 2  & 3 \\
 4 \\
\end{ytableau}\;,
\]
then
\[
\RR(T)=\boxed{4}\otimes\boxed{2}\otimes\boxed{3}\otimes\boxed{1}\otimes\boxed{1}\otimes\boxed{2}\,.
\]

Then, we get a map $\RR: \mathcal{B}_{\lambda}\to \mathbb{B}^{\otimes |\lambda|}$.

\begin{theorem}[{\cite[Theorem 3.2]{6}}]
\textit{Let $\lambda$ be a partition of $k$ of length $\leq N$. Then, $\RR(\mathcal{B}_{\lambda})$ is a connected component of $\mathbb{B}^{\otimes k}$. }
\end{theorem}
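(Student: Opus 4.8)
The plan is to show that $\RR(\mathcal{B}_\lambda)$ is (i) stable under the Kashiwara operators $e_i$ and $f_i$, hence a union of connected components of $\mathbb{B}^{\otimes k}$, and (ii) connected, hence a single component. Throughout I use that $\RR$ is injective (the shape $\lambda$ is fixed, so $T$ is recovered from the sequence of entries in $\RR(T)$), that $\mathbb{B}^{\otimes k}$ is finite with $N^{k}$ elements, and that it is seminormal by the Proposition applied $k-1$ times. The basic computational tool I would set up first is the signature rule obtained by unwinding the tensor product definition: given $b_1\otimes\cdots\otimes b_k$, record a $+$ under each factor $\boxed{i}$ and a $-$ under each factor $\boxed{i+1}$ (ignoring all other factors), then cancel adjacent pairs $-+$ repeatedly until a reduced word $+^{\phi_i}-^{\epsilon_i}$ remains; now $f_i$ replaces the factor giving the rightmost surviving $+$ by $\boxed{i+1}$ (and is $0$ if no $+$ survives), while $e_i$ replaces the factor giving the leftmost surviving $-$ by $\boxed{i}$. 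That this rule agrees with the tensor product formulas follows by a short induction on $k$.

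Step (i) is the heart of the argument. Fix $T\in\mathcal{B}_\lambda$ and $i\in I$ and suppose $f_i\RR(T)\neq 0$. Only the factors $\boxed{i}$ and $\boxed{i+1}$ enter the signature rule, and in a semistandard tableau each column contains at most one $i$ and at most one $i+1$, with $i$ directly above $i+1$ when both occur. I would then show that the box $(a,b)$ of $T$ furnishing the rightmost surviving $+$ — the one whose entry is changed from $i$ to $i+1$ — has two properties forced by the bracketing: it has no entry $i$ immediately to its right, and no entry $i+1$ immediately below it. Indeed, an $i$ to the right would supply a $+$ occurring later in the reading word and surviving whenever this one does, contradicting maximality; and an $i+1$ below lies in an earlier (lower) row, hence to the left, and cancels the corresponding $+$. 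Granting these two properties, changing $(a,b)$ from $i$ to $i+1$ keeps the rows weakly increasing and the columns strictly increasing, so the new filling $T'$ is semistandard; as only one position of the reading word changes value, $\RR(T')=f_i\RR(T)$. The case of $e_i$ is symmetric, with ``leftmost surviving $-$'', ``$i+1$ to the left'', and ``$i$ above'' replacing the dual conditions. Hence $e_i,f_i$ map $\RR(\mathcal{B}_\lambda)$ into $\RR(\mathcal{B}_\lambda)\sqcup\{0\}$, so $\RR(\mathcal{B}_\lambda)$ is a union of connected components.

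Step (ii) is connectivity. Let $T_\lambda$ be the tableau whose $r$-th row is filled with $r$'s. Reading bottom-to-top, for each $i$ the $(i+1)$-signs (all of row $i+1$) precede the $i$-signs (all of row $i$), and $\lambda_{i+1}\le\lambda_i$, so every $-$ cancels and $\epsilon_i(\RR(T_\lambda))=0$; thus $\RR(T_\lambda)$ is a highest weight element. Conversely it is the only one in $\RR(\mathcal{B}_\lambda)$: every semistandard tableau has its $(r,c)$-entry $\ge r$, so any $T\neq T_\lambda$ has a value strictly exceeding its row index; taking $v=i+1$ to be the smallest value occurring strictly above its home row, minimality forces all $i$'s to lie weakly below the chosen box and in rows read earlier, so the $-$ it contributes has no $+$ to its right and survives, whence $e_i\RR(T)\neq 0$. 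Finally, since $\mathbb{B}^{\otimes k}$ is finite and seminormal, every connected component of $\RR(\mathcal{B}_\lambda)$ contains a highest weight element (apply $e_i$'s until none act, which terminates as $\wt$ strictly increases); closure under the operators together with the single source $\RR(T_\lambda)$ then forces exactly one component. This proves $\RR(\mathcal{B}_\lambda)$ is a connected component of $\mathbb{B}^{\otimes k}$.

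The main obstacle is Step (i): verifying that the box selected by the signature rule always has no $i$ to its right and no $i+1$ below it (and the dual statement for $e_i$), so that semistandardness is preserved and the altered word is again a reading word. This needs a careful analysis of how the bracketing interacts with the row-reading order, since — as simple examples show — signs coming from different columns can cancel against one another, so the surviving signs are \emph{not} merely those of columns containing a single relevant entry, and the position of the changed box must be pinned down through the reduction itself.
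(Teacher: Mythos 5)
This statement is never proved in the paper: it is imported verbatim as background from Bump--Schilling \cite{6} (their Theorem 3.2), and the paper's own arguments simply invoke it. So there is no in-paper proof to compare yours against, and your proposal has to stand on its own. In outline it does: the architecture --- (i) $\RR(\mathcal{B}_\lambda)$ is closed under $e_i,f_i$, hence a union of connected components of $\mathbb{B}^{\otimes k}$, and (ii) it contains exactly one highest weight element, while finiteness and seminormality give every component at least one, so there is exactly one component --- is a standard and correct direct argument. Two details you got right that are easy to get wrong: your signature rule (cancel adjacent $-+$, $f_i$ hits the rightmost surviving $+$, $e_i$ the leftmost surviving $-$) is the correct one for the Bump--Schilling tensor-product convention used in this paper, which is opposite to Kashiwara's; and your uniqueness argument for the highest weight element (taking $i+1$ minimal among values occurring above their home row forces every $i$ to sit in row $i$, so the offending $-$ has no $+$ to its right and survives) is complete as written. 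Your argument for ``no $i$ immediately to the right'' in Step (i) is also sound, via the non-crossing of the bracket matching.

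The one place where the write-up falls short of a proof is the step you yourself flag as the main obstacle: that the box $(a,b)$ hit by $f_i$ has no $i+1$ directly below it. Your one-line justification (``an $i+1$ below \ldots cancels the corresponding $+$'') is not literally correct: the $-$ contributed by $(a+1,b)$ is in general matched with a \emph{different} $+$. What is true, and what you need, is that the $+$ from $(a,b)$ is matched with \emph{some} $-$ from row $a+1$, and this follows from a block count you did not carry out. Let the $i+1$'s of row $a+1$ occupy columns $[\beta_1,\beta_2]\ni b$ and the $i$'s of row $a$ occupy columns $[\gamma_1,\gamma_2]\ni b$. Column strictness forces $\gamma_1\ge\beta_1$: otherwise the entry at $(a+1,\gamma_1)$ would be $\ge i+1$ yet $\le$ the entry $i+1$ at $(a+1,\beta_1)$, hence equal to $i+1$, contradicting the minimality of $\beta_1$. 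Moreover these two blocks are adjacent in the sign word, since entries of row $a+1$ right of column $\beta_2$ exceed $i+1$ and entries of row $a$ left of column $\gamma_1$ are less than $i$; so the sign word contains the consecutive factor $-^{p}+^{q}$ with $p=\beta_2-\beta_1+1$ and $q=\gamma_2-\gamma_1+1$. The $+$ from $(a,b)$ is the $(b-\gamma_1+1)$-st $+$ of this factor, and $b-\gamma_1+1\le b-\beta_1+1\le p$, so it is cancelled inside the factor; since the set of surviving signs is independent of the order in which adjacent $-+$ pairs are cancelled, it is cancelled globally, and in particular it cannot be the rightmost surviving $+$. Inserting this argument (and its mirror image for $e_i$) closes the gap, after which your proof is correct.
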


The above theorem enable us to define the crystal structure on $\mathcal{B}_{\lambda}$ by identifying it with $\RR(\mathcal{B}_{\lambda})$.

\begin{definition}[reading word]
For a skew semistandard tableau, its reading word is the word obtained by concatenating the rows, starting from the bottom row. 
\end{definition}

\begin{example}
If $T$ is a skew tableau
\[
\young(:1123,223,3)\,,
\]
then the reading word is $32231123$.
\end{example}

\begin{definition}[{\cite[8.4]{6}}]
A word $u_1\cdots u_k$  is called a \textit{Yamanouchi word} if for all $i$ and $j$,  the number of $i$'s appearing in $\{u_j, \ldots, u_k\}$ is greater than or equal to the number of $(i+1)$'s.
\end{definition}

\begin{definition}[{\cite[8.2]{6}}]
Let $\lambda/\mu$ be a skew partition and let $\nu=(\nu_1,\nu_2,\nu_3,\ldots)$ be a partition. The \textit{Littlewood-Richardson coefficient} $c_{\mu\nu}^{\lambda}$ is defined as the number of skew semistandard tableaux $T$ of shape $\lambda/\mu$ such that the reading word of $T$ is a Yamanouchi word, and the entry $j$ appears exactly $\nu_j$ times for each $j\in \mathbb{Z}_{>0}$.
\end{definition}

For a semistandard tableau $T$ and a positive integer $w$, we define the Schensted row insertion $T \gets w$ as in \cite[Section 1.1]{7}. Also, for a semistandard tableau $T$ and a positive integer $u$, we define the Schensted column insertion $u \to T$ as in \cite[Appendix A]{7}.

  \begin{theorem}[{\cite[Theorem 8.6, Theorem 9.5]{6}}]\label{thm:lrc}
\textit{Let $\mu$ and $\nu$ be partitions such that $l(\mu), \l(\nu)\leq N$. Then 
\[
\mathcal{B}_{\mu}^{(N)}\otimes\mathcal{B}_{\nu}^{(N)}
\cong \bigoplus_{l(\lambda)\leq N} (\mathcal{B}_{\lambda}^{(N)})^{\oplus c_{\mu\nu}^{\lambda}}.
\]
Moreover, $L\otimes M\in \mathcal{B}_{\mu}^{(N)}\otimes\mathcal{B}_{\nu}^{(N)}$ corresponds to the result of applying the sequence of Schensted row insertions
\[
(\cdots(L \gets w_1 ) \gets \cdots ) \gets w_{|\nu|}.
\]
Here $w_1\cdots w_{|\nu|}$ is the reading word of $M$.}
\end{theorem}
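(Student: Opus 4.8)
The plan is to embed $\mathcal{B}_{\mu}^{(N)}\otimes\mathcal{B}_{\nu}^{(N)}$ into a tensor power of the standard crystal $\mathbb{B}$ and to deduce both assertions from the single structural fact that Schensted row insertion is a morphism of crystals. Using the reading-word maps I regard $\mathcal{B}_{\mu}^{(N)}\otimes\mathcal{B}_{\nu}^{(N)}$ as a subcrystal of $\mathbb{B}^{\otimes|\mu|}\otimes\mathbb{B}^{\otimes|\nu|}=\mathbb{B}^{\otimes(|\mu|+|\nu|)}$ via $L\otimes M\mapsto\RR(L)\otimes\RR(M)$, so that each element is encoded by the concatenated word $\RR(L)\RR(M)$. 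Since $\mathbb{B}$ is seminormal, so is every tensor power, and by the theorem that each $\RR(\mathcal{B}_{\lambda}^{(N)})$ is a connected component of $\mathbb{B}^{\otimes k}$ together with the classification of connected seminormal crystals by their highest weight, the subcrystal $\mathcal{B}_{\mu}^{(N)}\otimes\mathcal{B}_{\nu}^{(N)}$ is a disjoint union of connected components, each isomorphic to exactly one $\mathcal{B}_{\lambda}^{(N)}$ with $l(\lambda)\leq N$. As each such component contains a unique highest weight vector, the multiplicity of $\mathcal{B}_{\lambda}^{(N)}$ equals the number of highest weight vectors of weight $\lambda$; thus the decomposition is reduced to counting these vectors and the ``Moreover'' clause is reduced to realizing the isomorphism explicitly.

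The technical core is the claim that the row-insertion map $P\colon\mathbb{B}^{\otimes k}\to\bigsqcup_{|\lambda|=k}\mathcal{B}_{\lambda}^{(N)}$, sending a word to the tableau obtained by inserting its letters one after another, is weight preserving and commutes with every $e_i$ and $f_i$. I would prove this by induction on the length of the word, reducing to the assertion that a single insertion $T\mapsto(T\gets x)$ intertwines the Kashiwara operators. Because $e_i$ and $f_i$ only involve the letters $i$ and $i+1$, I can ignore all other letters and pass to the two-row situation, where the operators are governed by the bracketing of the $i$'s against the $(i+1)$'s along the reading word; one then checks directly, case by case, that inserting a letter transports this bracketing compatibly with the operators. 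Granting this, I invoke the classical fact that inserting the reading word $\RR(T)$ of a semistandard tableau returns $T$; hence on $\mathcal{B}_{\mu}^{(N)}\otimes\mathcal{B}_{\nu}^{(N)}$ the morphism $P$ sends $L\otimes M$, whose word is $\RR(L)w_1\cdots w_{|\nu|}$, to $(\cdots(L\gets w_1)\gets\cdots)\gets w_{|\nu|}$. This is precisely the correspondence in the ``Moreover'' clause, and since $P$ is a crystal morphism restricting to an isomorphism on each component it realizes the asserted direct-sum decomposition.

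It remains to match the number of highest weight vectors of weight $\lambda$ with $c_{\mu\nu}^{\lambda}$. Applying the tensor product rule for the $e_i$ repeatedly, equivalently the signature rule on $\mathbb{B}^{\otimes k}$, the vector $L\otimes M$ is highest weight exactly when the word $\RR(L)\RR(M)$ is a Yamanouchi word; in particular its suffix $\RR(M)$ is Yamanouchi, which forces $M$ to be the highest weight tableau of $\mathcal{B}_{\nu}^{(N)}$. Recording the insertion $P$ on such a vector, the cells created while inserting $w_1\cdots w_{|\nu|}$ into $L$ occupy a skew shape $\lambda/\mu$; replacing each recording entry by the row of $M$ from which its inserted letter originated produces a semistandard skew tableau of shape $\lambda/\mu$ and content $\nu$, and the Yamanouchi condition on $\RR(L)\RR(M)$ translates exactly into the requirement that this skew tableau have a Yamanouchi reading word. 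This yields a bijection between the highest weight vectors of weight $\lambda$ and the skew tableaux counted by $c_{\mu\nu}^{\lambda}$, so the multiplicity equals $c_{\mu\nu}^{\lambda}$ as required.

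The main obstacle is the lemma of the second paragraph, that a single Schensted insertion commutes with the Kashiwara operators. The subtlety is that applying $e_i$ or $f_i$ can change which cells are bumped during the insertion, so the agreement cannot be checked cell by cell; the safe route is to encode the entire dependence on the letters $i$ and $i+1$ through the bracketing rule and to verify compatibility in the two-row reduction, where all configurations can be enumerated. Once this lemma is secured, the decomposition into components, the computation of multiplicities through highest weight vectors, and the explicit insertion description follow as above, the remaining inputs (a reading word inserting to its own tableau, and the translation between Yamanouchi words and lattice skew tableaux) being standard facts about the Robinson--Schensted--Knuth correspondence.
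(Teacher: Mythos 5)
You should first be aware that the paper does not prove this statement at all: it is quoted directly from Bump--Schilling \cite[Theorems 8.6 and 9.5]{6} and used as a black box, so the only meaningful comparison is with the textbook argument that your outline implicitly reconstructs. Your overall architecture --- embed via reading words, show that Schensted insertion is a strict morphism of crystals, decompose into connected components, count highest weight vectors --- is indeed that argument, but two of your steps have genuine gaps. The appeal to ``the classification of connected seminormal crystals by their highest weight'' is unfounded: no such classification exists, since seminormality is far too weak to rigidify a crystal (this is precisely why Bump--Schilling must introduce normal crystals and the Stembridge axioms), so the fact that every component of $\mathcal{B}_{\mu}^{(N)}\otimes\mathcal{B}_{\nu}^{(N)}$ is isomorphic to some $\mathcal{B}_{\lambda}^{(N)}$ must come entirely from your insertion morphism $P$. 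There, however, you merely assert that $P$ restricts to an isomorphism on each component. Strictness plus connectedness does give that $P$ maps each component onto a single $\mathcal{B}_{\lambda}^{(N)}$, but injectivity is exactly the hard point and is nowhere argued; the standard repair is to prove that the Kashiwara operators preserve the recording tableau (the coplactic property), so that components are fibers of the recording tableau and injectivity follows from injectivity of RSK. Without this, neither the direct sum decomposition nor the multiplicity count follows from your morphism lemma.

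The second gap is in the counting of highest weight vectors, and it is not merely a convention slip. With the paper's conventions (suffix Yamanouchi condition, Bump--Schilling tensor rule) you correctly conclude that $M$ is forced to be the Yamanouchi tableau $u_{\nu}$; but then $\RR(u_{\nu})$ inserts the rows of $u_{\nu}$ in the order $l(\nu), l(\nu)-1,\ldots,1$, so your recording rule (label each created cell by the row of $M$ its letter came from) gives larger labels to earlier cells. Since a cell can be created only when the cell above it already exists, labels strictly decrease down columns: the recording is a reverse tableau, not a semistandard one, and its reading word need not be Yamanouchi. Concretely, take $\mu=(1)$, $\nu=(1,1)$, $\lambda=(2,1)$, where $c_{\mu\nu}^{\lambda}=1$: the unique highest weight vector of weight $(2,1)$ is $L\otimes u_{(1,1)}$ with $L$ the one-box tableau with entry $1$; inserting the word $21$ into $L$ creates the cell $(1,2)$ with label $2$ and then the cell $(2,1)$ with label $1$, and this filling has reading word $12$, which is not Yamanouchi, whereas the unique Littlewood--Richardson tableau of shape $(2,1)/(1)$ and content $(1,1)$ is the opposite filling. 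The set of highest weight vectors $L\otimes u_{\nu}$ of weight $\lambda$ is naturally in bijection (via the companion-tableau correspondence, not via your recording) with Littlewood--Richardson tableaux of shape $\lambda/\nu$ and content $\mu$, i.e.\ it computes $c_{\nu\mu}^{\lambda}$; to finish you would therefore need either a corrected bijection or the symmetry $c_{\mu\nu}^{\lambda}=c_{\nu\mu}^{\lambda}$, which itself requires proof. As written, the identification of the multiplicity with $c_{\mu\nu}^{\lambda}$ --- the substance of the Littlewood--Richardson rule --- is not established.
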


\begin{remark}
In the setting of the above theorem, we let $u_1\cdots u_{|\mu|}$ be the reading word of $L$. By \cite[A.2]{7}, we have
\begin{align*}
(\cdots(L \gets w_1 ) \gets \cdots ) \gets w_{|\nu|}&=(\cdots((\cdots(\varnothing \gets u_1) \gets \cdots) \gets u_{|\mu|}) \gets w_1 \gets \cdots) \gets w_{|\nu|} \\
&=u_1 \to (\cdots \to (u_{|\mu|} \to (w_1 \to (\cdots \to (w_{|\nu|} \to \varnothing)\cdots)))\cdots) \\
&=u_1 \to (\cdots \to (u_{|\mu|} \to M)\cdots).
\end{align*}
\end{remark}

\section{The Schur multiple zeta functions}

Keep the setting of the previous section.

The combinatorial setting described below is conformed to \cite{1}.

For a set $X$ and a partition $\lambda$, the Young tableau $T=(t_{ij})$ of shape $\lambda$ over $X$ is a filling of Young diagram obtained by putting $t_{ij}\in X$ into the $(i,j)$ box in the Young diagram of shape $\lambda$. Let $T(\lambda, X)$ denote the set of all Young tableau of shape $\lambda$ over $X$. For a partition $\lambda$, we define 
\[
D(\lambda):=\{(i,j)\in \mathbb{Z}^2 \mid 1\leq i \leq l(\lambda), 1\leq j \leq \lambda_i \}
\] and the conjugate of $\lambda$ is denoted by $\lambda'$.

For a partition $\lambda$, $(i,j)\in D(\lambda)$ is called a corner of $\lambda$ if $(i+1,j)\notin D(\lambda)$ and $(i,j+1)\notin D(\lambda)$. We denote the set of all corners of $\lambda$ by $C(\lambda)$.

\begin{definition}[Schur multiple zeta function, \cite{1}]
For a partition $\lambda$ and $\bm{s}=(s_{ij}) \in T(\lambda,\mathbb{C})$, the Schur multiple zeta function is defined to be
\begin{equation}
\label{dd}
\zeta_{\lambda}(\bm{s})=\sum_{M\in SSYT(\lambda)}\frac{1}{M^{\bm{s}}}.
\end{equation}
Here $SSYT(\lambda)\,( \subset\hspace{-3pt}T(\lambda, \mathbb{N}))$ is the set of all semistandard Young tableau of shape $\lambda$ and $M^{\bm{s}}=\prod_{(i,j)\in D(\lambda)}m_{ij}^{s_{ij}}$ for $M=(m_{ ij})\in SSYT(\lambda)$.
\end{definition}

\begin{remark}
If $\bm{s}\in T(\lambda,\mathbb{R})$, then each term of the series (\ref{dd}) is positive. Hence, if the series (\ref{dd}) is convergent, then it is absolutely convergent. Thus, any rearrangement converges to the same value. 
\end{remark}

\begin{proposition}[{\cite[Lemma 2.1]{1}}]\label{thm:c1}
\textit{The series (\ref{dd}) is absolutely convergent in
\[
W_{\lambda}=\{\bm{s}=(s_{ij}) \mid \mathrm{Re}(s_{ij})\geq 1 \text{ for all } (i,j)\in D(\lambda), \mathrm{Re}(s_{ij})>1 \text{ for all } (i,j)\in C(\lambda)\}. 
\]}
\end{proposition}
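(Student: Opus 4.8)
The plan is to reduce the statement to one about real exponents and then to induct on the number of boxes by peeling off a corner. Since every entry $m_{ij}$ of a semistandard tableau is a positive integer, $|M^{\bm s}|=\prod_{(i,j)}m_{ij}^{\mathrm{Re}(s_{ij})}$ depends only on $\bm\sigma:=(\mathrm{Re}(s_{ij}))$. Hence absolute convergence of the series (\ref{dd}) on $W_{\lambda}$ is equivalent to convergence of the nonnegative series $Z_{\lambda}(\bm\sigma):=\sum_{M\in \mathrm{SSYT}(\lambda)}\prod_{(i,j)\in D(\lambda)}m_{ij}^{-\sigma_{ij}}$ under the hypothesis that $\sigma_{ij}\ge 1$ for all $(i,j)\in D(\lambda)$ and $\sigma_{ij}>1$ for all $(i,j)\in C(\lambda)$. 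I would prove this last statement for every partition $\lambda$ by induction on $|\lambda|$, carrying exactly this hypothesis as the inductive invariant.

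For the base case $\lambda=(1)$ the single box is its own corner and $Z_{(1)}(\sigma)=\sum_{m\ge 1}m^{-\sigma}=\zeta(\sigma)$ converges because $\sigma>1$. For the inductive step I would take $(a,b)$ to be the bottom-most corner of $\lambda$ (so $a=\ell(\lambda)$, $b=\lambda_a$) and set $\mu=\lambda\setminus\{(a,b)\}$. Summing first over the entry $m_{ab}$ with all remaining entries fixed to a semistandard filling of $\mu$, the semistandard conditions force $m_{ab}$ to range over all integers $\ge L$, where $L=\max(m_{a-1,b}+1,\,m_{a,b-1})$, omitting whichever neighbour does not exist. Writing $\epsilon=\sigma_{ab}-1>0$, the inner sum is bounded by $\sum_{m\ge L}m^{-1-\epsilon}\le (1+\epsilon^{-1})L^{-\epsilon}$.

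The crucial point is to convert this tail bound into an admissible exponent system on $\mu$. Removing a corner can create new corners only among the at most two boxes $(a,b-1)$ and $(a-1,b)$, while every other corner of $\lambda$ remains a corner of $\mu$ with its exponent unchanged. Since $L\ge m_{a,b-1}$ and $L\ge m_{a-1,b}$, one has $L^{-\epsilon}\le (m_{a,b-1}\,m_{a-1,b})^{-\epsilon/2}$ (or simply $L^{-\epsilon}\le m^{-\epsilon}$ on the one newly exposed box, when only one exists). Thus after summing out $m_{ab}$ the remaining series is dominated by $(1+\epsilon^{-1})\,Z_{\mu}(\bm\sigma')$, where $\bm\sigma'$ agrees with $\bm\sigma$ on $D(\mu)$ except that the exponents of $(a,b-1)$ and $(a-1,b)$ are each raised by $\epsilon/2$. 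These are precisely the boxes that may have become corners, so $\bm\sigma'$ again satisfies the invariant ($\ge 1$ everywhere, $>1$ on $C(\mu)$), and the inductive hypothesis yields $Z_{\mu}(\bm\sigma')<\infty$.

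I expect the main obstacle to be exactly this bookkeeping of corners: a naive induction fails because a box that is interior in $\lambda$ (and so is allowed to carry exponent exactly $1$) can become a corner of $\mu$, where exponent $>1$ is required. The device of distributing the surplus $\epsilon=\sigma_{ab}-1$ of the removed corner equally over the at most two newly exposed boxes is what restores the invariant; one then only has to observe that the finitely many constants $(1+\epsilon^{-1})$ accumulated over the $|\lambda|$ steps are finite, which holds because every corner's surplus remains strictly positive throughout the induction.
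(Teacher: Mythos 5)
Your argument is correct, but there is nothing in this paper to compare it against: Proposition \ref{thm:c1} is imported verbatim from \cite[Lemma 2.1]{1} and used as a black box, so the paper itself contains no proof of this statement. Your corner-peeling induction is a sound, self-contained substitute, and its key verifications all check out. Since every entry of a semistandard tableau is a positive integer, absolute convergence indeed depends only on the real parts. For the bottom-most corner $(a,b)$ (with $a=\ell(\lambda)$, $b=\lambda_a$) the admissible values of the removed entry, given a semistandard filling of $\mu=\lambda\setminus\{(a,b)\}$, are exactly the integers $m\ge L=\max(m_{a-1,b}+1,\,m_{a,b-1})$, because $(a,b)$ has no neighbour below or to its right; summing out $m_{ab}$ first is justified by nonnegativity (Tonelli). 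The tail bound $\sum_{m\ge L}m^{-1-\epsilon}\le(1+\epsilon^{-1})L^{-\epsilon}$ holds since $L\ge 1$, and the splitting $L^{-\epsilon}\le m_{a,b-1}^{-\epsilon/2}m_{a-1,b}^{-\epsilon/2}$ is valid because $L$ dominates both neighbouring entries. The combinatorial point you identify is the crux and is stated correctly: a box that was not a corner of $\lambda$ can become a corner of $\mu$ only if the removed box was its right or lower neighbour, i.e.\ only $(a,b-1)$ or $(a-1,b)$, and these are precisely the boxes whose exponents you raise, so the invariant ($\ge 1$ everywhere, $>1$ on corners) propagates; old corners keep their surplus, new corners inherit surplus at least $\epsilon/2>0$, and only $|\lambda|$ finite constants $(1+\epsilon^{-1})$ accumulate. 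What your proof buys is independence from \cite{1}: with it, the convergence statements underlying Theorems \ref{thm:i1}, \ref{thm:i2} and \ref{thm:lr} rest on an argument given in full rather than on a citation.
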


There is the `truncated' version of Schur multiple functions.

\begin{definition}[\cite{1}]
Let $\lambda$ be a partition such that $l(\lambda)\leq N$ and let $\bm{s}\in T(\lambda,\mathbb{C})$. Then the function $\zeta_{\lambda}^{(N)}(\bm{s})$ is defined to be
\[
\zeta_{\lambda}^{(N)}(\bm{s})=\sum_{M\in\mathcal{B}_{\lambda}^{(N)}}\frac{1}{M^{\bm{s}}}.
\]
\end{definition}

\begin{remark}\label{thm:ww}
We have $\mathcal{B}_{\lambda}^{(N)}\subset \mathcal{B}_{\lambda}^{(N')}\subset SSYT(\lambda)\;(N\le N')$ and $\bigcup_{N}\mathcal{B}_{\lambda}^{(N)}=SSYT(\lambda)$. In particular, we have 
\[
\lim_{N\to \infty}\sum_{M\in \mathcal{B}_{\lambda}^{(N)}}\frac{1}{M^{\bm{s}}}=\sum_{M\in SSYT(\lambda)}\frac{1}{M^{\bm{s}}}
\]
for $\bm{s}\in W_{\lambda}$.
\end{remark}

\begin{remark}
The \textit{multiple zeta functions} are defined to be
\[
\zeta(s_1,\cdots,s_d)=\sum_{0<m_1<\cdots<m_d}\frac{1}{m_1^{s_1}\cdots m_d^{s_d}}
\]
and the \textit{multiple zeta-star functions} are defined to be
\[
\zeta^{\star}(s_1,\cdots,s_d)\sum_{0<m_1\leq\cdots\leq m_d}\frac{1}{m_1^{s_1}\cdots m_d^{s_d}}.
\]
One can see  that the multiple zeta function is the Schur multiple zeta function of type $(d)$ and the multiple zeta-star function is the Schur multiple zeta function of type $(1^d)$.
\end{remark}

\section{The Pieri formulas}

Keep the setting of the previous section.

\subsection{The pushing rule}

To formulate the Pieri formulas for Schur multiple zeta functions, we recall the `pushing rule' for tableaux introduced by Nakasuji-Takeda \cite{2}.

\begin{definition}
Let $\lambda=(\lambda_1,\ldots,\lambda_l)$ be a partition. For a positive integer $n$, let $\mathcal{E}(\lambda,n)$ denote the set of all subsets $K$ of $\mathbb{Z}_{>0}$ satisfying the following conditions:
\begin{enumerate}
  \item $\#K=n$.
  \item The sequence $\lambda_K=((\lambda_K)_1,(\lambda_K)_2,(\lambda_K)_3,\ldots)$ defined by
\[
(\lambda_K)_k=
\begin{cases}
 \lambda_k+1 & (k\in K) \\
 \lambda_k   & (k\notin K),
\end{cases}
\]
forms a partition.
\end{enumerate}

 For a partition $\mu$ and a positive integer $m$, we define $\mathcal{H}(\mu,m)$ to be $\mathcal{E}(\mu',m)$. For $J\in \mathcal{H}(\mu,m)$, the partition $((\mu')_J)'$ is denoted by $\mu^J$.

\end{definition}

\begin{definition}[{\cite[2.5]{2}}]\label{thm:gak2}
Let $\lambda$ be a partition and let $m$ be a positive integer. Let $J=\{\alpha_1,\ldots,\alpha_m\}\;(\alpha_1<\cdots<\alpha_m)$ be an element of $\mathcal{H}(\lambda,m)$. 
For $\bm{s}\in T(\lambda,\mathbb{C})$
and
$\bm{t}=
\begin{ytableau}
 t_1 & t_2 & \cdots & t_m
 \end{ytableau}
\in T((m),\mathbb{C})$, the Young tableau $\bm{u}^J(\bm{s},\bm{t})$ over $\mathbb{C}$ of shape $\lambda^J$ is defined as follows:
\begin{enumerate}
  \item For each $j$, place $t_j$ in the box $(1,\alpha_j)$.
  \item For $(i,j)\in D(\lambda)$,
  \begin{enumerate}
    \item place $s_{ij}$ in the box $(i,j)$ if $j\notin J$,
    \item place $s_{ij}$ in the box $(i+1,j)$ if $j\in J$.
  \end{enumerate}
\end{enumerate}

\end{definition}

\begin{example}
In Definition \ref{thm:gak2}, we fix $\lambda=(3,2,1,1), m=3$ and $J=\{1,3,4\}$. Then, $\bm{u}^J(\bm{s},\bm{t})$ is represented by the following diagram:
\[
\begin{ytableau}
 t_1 & s_{12} & t_2 & t_3  \\
 s_{11} & s_{22} & s_{13}  \\
 s_{21}   \\
 s_{31}\\
 s_{41}
\end{ytableau}\;.
\]
\end{example}

\begin{definition}[{\cite[2.5]{2}}]\label{thm:gak1}
Let $\lambda$ be a partition and let $n$ be a positive integer. Let $K=\{\beta_1,\cdots,\beta_n\}\;(\beta_1<\cdots<\beta_n)$ be an element of $\mathcal{E}(\lambda,n)$. For $
\bm{s}=
\begin{ytableau}
 s_1 \\
 s_2 \\
 \vdots \\
 s_n
\end{ytableau}
$
and
$\bm{t}\in T(\lambda,\mathbb{C})
$, the Young tableau $\bm{u}_K(\bm{s},\bm{t})$ over $\mathbb{C}$ of shape $\lambda_K$ is defined as follows:
\begin{enumerate}
  \item For each $k\in K$, place $s_k$ in the box $(\beta_k,1)$.
  \item For $(i,j)\in D(\lambda)$,
  \begin{enumerate}
    \item place $t_{ij}$ in the box $(i,j)$ if $i\notin K$,
    \item place $t_{ij}$ in the box $(i,j+1)$ if $i\in K$.
  \end{enumerate}
\end{enumerate}

\end{definition}

\begin{example}
In Definition \ref{thm:gak1}, we fix $\lambda=(3,2,1,1), n=4$ and $K=\{1,3,5,6\}$. Then, $\bm{u}_K(\bm{s},\bm{t})$ is represented by the following diagram:
\[
\begin{ytableau}
 s_1 & t_{11} & t_{12} & t_{13}  \\
 t_{21} & t_{22}  \\
 s_2 & t_{31} \\
 t_{41}\\
 s_3 \\
 s_4 \\
\end{ytableau}\;.
\]
\end{example}

\subsection{The Pieri formula for $H$-type}

\begin{theorem}\label{thm:i1}
\textit{Let $\lambda$ be a partition with $\lambda_1=r$, and let $\lambda'=(\lambda_1',\ldots, \lambda_r')$ denote its conjugate. Let $\bm{s}\in T(\lambda,\mathbb{C})$
and
$\bm{t}=
\begin{ytableau}
 t_1 & t_2 & \cdots & t_m
\end{ytableau}
\in T((m),\mathbb{C})$. Assume that the real parts of all $s_{ij},t_k$ are greater than or equal to $1$ and the real parts of 
\[
t_1,\ldots,t_{\min\{m,r-1\}},s_{11},\ldots,s_{{\lambda'}_2 1},s_{12},\ldots,s_{{\lambda'}_2 2},s_{13},\ldots,s_{{\lambda'}_3 3},\ldots,s_{1r},\ldots,s_{{\lambda'}_r r},t_m,s_{{\lambda'}_1 1}
\]
are greater than $1$. Then, the following identity holds:
\[\sum_{sym}\zeta_{\lambda}(\bm{s})\zeta_{(m)}(\bm{t})=\sum_{sym}\sum_{J\in \mathcal{H}(\lambda,m)} \zeta_{\lambda^J}(\bm{u}^J(\bm{s},\bm{t})).\]
Here $\displaystyle\sum_{sym}$ denotes the summation over all permutations of the indeterminates
\[
\{t_1,\ldots,t_{\min\{m,r-1\}},s_{11},\ldots,s_{{\lambda'}_2 1},s_{12},\ldots,s_{{\lambda'}_2 2},s_{13},\ldots,s_{{\lambda'}_3 3},\ldots,s_{1r},\ldots,s_{{\lambda'}_r r}\}.
\]
}
\end{theorem}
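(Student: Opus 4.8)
The plan is to prove the stated equality first for the truncated series and then let $N\to\infty$. Since every entry of every semistandard tableau is a positive integer, each series in sight has positive terms, so by Remark \ref{thm:ww} and Proposition \ref{thm:c1} it suffices to establish
\[
\sum_{sym}\zeta^{(N)}_{\lambda}(\bm{s})\,\zeta^{(N)}_{(m)}(\bm{t})
=\sum_{sym}\sum_{J\in\mathcal{H}(\lambda,m)}\zeta^{(N)}_{\lambda^J}(\bm{u}^J)
\]
for all sufficiently large $N$ and then pass to the limit. The role of the hypotheses on the real parts is to keep the shapes $\lambda$, $(m)$ and all $\lambda^J$ inside their domains of absolute convergence under every rearrangement occurring in $\sum_{sym}$: because each exponent that is actually permuted (the set over which $\sum_{sym}$ runs, call it $S$) has real part $>1$, whichever box becomes a corner of some $\lambda^J$ after a permutation still carries a convergent exponent, while the two distinguished exponents $t_m$ and $s_{\lambda'_1 1}$ are pinned with real part $>1$ to cover the corners of $(m)$ and of $\lambda$ themselves.

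For the truncated identity I would invoke the Pieri specialization of Theorem \ref{thm:lrc}. As $c^{\lambda^J}_{\lambda,(m)}=1$ precisely for the horizontal strips $J\in\mathcal{H}(\lambda,m)$ and is $0$ otherwise, there is a crystal isomorphism
\[
\mathcal{B}^{(N)}_{\lambda}\otimes\mathcal{B}^{(N)}_{(m)}\;\cong\;\bigoplus_{J\in\mathcal{H}(\lambda,m)}\mathcal{B}^{(N)}_{\lambda^J},
\]
and by the remark after Theorem \ref{thm:lrc} it is realized by successively row-inserting the entries of $M'\in\mathcal{B}^{(N)}_{(m)}$ into $M$, written $M\gets M'$ (the factor $M'$ being read off as its weakly increasing row word); the boxes created by the insertion are exactly the horizontal $m$-strip indexed by $J$. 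The structural input I will use repeatedly is that a crystal isomorphism preserves weight, so that $\wt(M\gets M')=\wt(M)+\wt(M')$, i.e. single-row insertion preserves the \emph{content}, the multiset of entries of a tableau.

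Now I would combine content-preservation with the symmetrization. For a fixed tableau and a fixed assignment of exponents to its boxes, applying $\sum_{sym}$ turns the monomial into a symmetric function of the permuted exponents whose value depends only on two data: the multiset of entries sitting in the boxes that carry a \emph{symmetrized} exponent, and, for each exponent left out of $S$, the single entry that it multiplies. Pairing $(M,M')$ with $P=M\gets M'$ through the isomorphism above, the equality of the two symmetrized monomials therefore reduces to two claims about the bijection: (1) each exponent outside $S$ — namely $t_{r+1},\dots,t_m$ and the first-column exponents $s_{i1}$ with $i>\lambda'_2$ — multiplies the same entry on both sides; and (2) the multisets of entries in the symmetrized boxes coincide. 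Because insertion preserves the total content, claim (2) is an immediate consequence of claim (1). Summing the resulting per-pair identity over all $(M,M')$ then yields the truncated identity; at the level of contents this is nothing but the classical Pieri relation $\sum_{c_1+c_2=c}K_{\lambda,c_1}K_{(m),c_2}=\sum_{J}K_{\lambda^J,c}$ that the displayed crystal isomorphism encodes.

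The heart of the argument, and the step I expect to be the genuine obstacle, is claim (1): one must show that the pushing rule defining $\bm{u}^J$ was engineered to be compatible with single-row insertion after symmetrizing over exactly the set $S$. Concretely, this means tracking the bumping paths to prove that the large inserted letters attached to $t_{r+1},\dots,t_m$ come to rest at the tops of the new columns $\alpha_{r+1}<\cdots<\alpha_m$ in the first row, in the same order in which they are inserted, and that the deep first-column entries attached to $s_{i1}$ with $i>\lambda'_2$ — those lying in the single-box rows of $\lambda$ — are never displaced by the insertion. Once these two invariance properties of single-row insertion are verified, claim (1) follows, the per-content count collapses to the Pieri relation above, and the limit $N\to\infty$ is routine by Proposition \ref{thm:c1} and Remark \ref{thm:ww}.
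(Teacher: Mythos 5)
Your overall framework coincides with the paper's: truncate to $\zeta^{(N)}$, decompose $\mathcal{B}^{(N)}_{\lambda}\otimes\mathcal{B}^{(N)}_{(m)}$ via Theorem \ref{thm:lrc} (whose Pieri case gives exactly the shapes $\lambda^J$, $J\in\mathcal{H}(\lambda,m)$, with multiplicity one), compare symmetrized monomials pair by pair under the row-insertion bijection, and let $N\to\infty$ using Proposition \ref{thm:c1} and Remark \ref{thm:ww}. Your reduction of claim (2) to claim (1) via weight preservation is correct and is a clean way to organize the comparison. However, there is a genuine gap: claim (1) is precisely the content of the paper's key Lemma \ref{thm:asa}, and you do not prove it --- you explicitly defer it as ``the genuine obstacle.'' That lemma is the real work of the theorem: the paper tracks the bumping routes $R_1,\ldots,R_m$ (using Fulton's facts that each route moves weakly left as it descends and that successive routes lie strictly to the right of one another), builds the exponent-transport tableau $\bm{v}$ satisfying $1/(L^{\bm{s}}M^{\bm{t}})=1/T^{\bm{v}}$ exactly, and then verifies box by box that $\bm{v}$ and $\bm{u}^J$ agree on every box carrying a non-symmetrized exponent.

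Worse, one of the two ``invariance properties'' you propose to verify is false as stated, so the path you sketch toward claim (1) would break down. When $1\in J$ (i.e.\ $\alpha_1=1$), the deep first-column entries $L_{(i,1)}$ with $i>\lambda'_2$ \emph{are} displaced by the insertion: below row $\lambda'_2$ the rows of $\lambda$ consist of a single box, so the tail of the first bumping route is confined to column $1$ and pushes each such entry down one row, giving $T_{(i+1,1)}=L_{(i,1)}$. Claim (1) nevertheless holds, but only because Definition \ref{thm:gak2} shifts the exponents in lockstep: when $1\in J$, the tableau $\bm{u}^J$ places $s_{i1}$ in box $(i+1,1)$, not $(i,1)$. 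If the entries were literally ``never displaced,'' claim (1) would fail in exactly this case. So the statement you actually need is that entry and exponent move together under the pushing rule, and proving it requires the bumping-route analysis of Lemma \ref{thm:asa}: that the first route reaches column $1$ no later than row $\lambda'_2+1$; that at most $r$ insertions bump anything (so $l_0\leq r$), whence the letters carrying $t_{r+1},\ldots,t_m$ land in the new first-row boxes $(1,\alpha_l)$ in order and are never bumped afterward. Without these verifications, your proposal is a correct reduction resting on an unproven --- and partly misstated --- combinatorial core.
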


To prove Theorem \ref{thm:i1}, we need the following lemma:

\begin{lemma}\label{thm:asa}
\textit{We keep the setting of the above theorem. We assume that $N\geq l(\lambda)+1$. Let $L\in \mathcal{B}_{\lambda}^{(N)}$ and 
\[
M=\begin{ytableau}
w_1 & \cdots & w_m
\end{ytableau} \in \mathcal{B}_{(m)}^{(N)}.
\]
Further, suppose that the tableau
\[
T=(\cdots(L \gets w_1) \gets \cdots ) \gets w_m
\]
has shape $\lambda^J$, where $J=\{\alpha_1,\ldots,\alpha_m\}\;(\alpha_1<\cdots<\alpha_m)$ is an element of $\mathcal{H}(\lambda,m)$. Then the following identity holds:
\[\sum_{sym} \frac{1}{L^{\bm{s}}M^{\bm{t}}}=\sum_{sym}\frac{1}{T^{\bm{u}^J(\bm{s},\bm{t})}}.
\]}
\end{lemma}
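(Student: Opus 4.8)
The plan is to reduce the claimed identity to two purely combinatorial facts about the Schensted correspondence: that row insertion preserves the multiset of entries, and that $\sum_{sym}$ turns a monomial into a quantity depending only on the \emph{multiset} of bases carrying the symmetrized exponents. Write $L=(\ell_{ij})$, $T=(\tau_{ij})$ and $M=[\,w_1\cdots w_m\,]$. Since $T=(\cdots(L\gets w_1)\gets\cdots)\gets w_m$, weight preservation of insertion gives that the multiset of entries of $T$ equals that of $L$ together with $w_1,\dots,w_m$; equivalently the multiset of bases appearing in $1/T^{\bm u^J}$ equals the multiset appearing in $1/(L^{\bm s}M^{\bm t})$.

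First I would isolate the algebraic mechanism. For a family of bases $a_p$ indexed by the slots that hold the symmetrized indeterminates $S$, the expression $\sum_{sym}\prod_p a_p^{-e_p}$ (with $e_p$ ranging over $S$) is unchanged when the $a_p$ are permuted among themselves, because such a permutation can be absorbed into the permutation of the indeterminates. Hence applying $\sum_{sym}$ to either side factors as the fixed contribution of the exponents \emph{not} in $S$, times a symmetric expression in the bases attached to the exponents in $S$. Consequently the lemma follows once I verify: (a) each exponent outside $S$ is attached to the same base on the two sides; and (b) the multisets of bases attached to the symmetrized exponents coincide. Since the total multisets of bases agree by weight preservation, (b) is a formal consequence of (a), and everything reduces to checking (a).

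The exponents excluded from $S$ are exactly the first-column tail $s_{i1}$ with $\lambda'_2<i\le\lambda'_1$, together with $t_m$ in the single case $m=r+1$ (so that $J=\{1,\dots,r+1\}$). On the $\bm s,\bm t$ side these carry the bases $\ell_{i1}$ and $w_m$; I must show the entries of $T$ into which the pushing rule places them equal these values. For $t_m$: it is placed at $(1,r+1)$, and since $w_m$ is the largest inserted letter and is inserted last, it is appended to the end of row $1$, giving $\tau_{1,r+1}=w_m$. For the first-column tail I would invoke the row-bumping lemma: as $w_1\le\cdots\le w_m$, the successive bumping routes move strictly to the right, so only the insertion of $w_1$ can reach column $1$. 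If it does, then, because the diagram has only its first column below row $\lambda'_2$, its route must enter column $1$ at a row $\le\lambda'_2+1$ and then descend straight to the bottom, shifting each tail entry $\ell_{i1}\,(i>\lambda'_2)$ down by exactly one box; if it does not reach column $1$, that column is untouched. In either case this is precisely the pushing rule: $\tau_{i1}=\ell_{i1}$ when $1\notin J$ and $\tau_{i+1,1}=\ell_{i1}$ when $1\in J$, for all $i>\lambda'_2$.

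I expect the row-bumping analysis of the first column to be the main obstacle: one has to argue carefully that the portion of column $1$ strictly below the height of column $2$ is transported rigidly by the insertion (merely shifted down when a box is created there), and that no letter other than $w_1$ disturbs it. Once (a) is established for every excluded exponent, (b) follows from weight preservation and the equality of the two symmetrized sums is immediate.
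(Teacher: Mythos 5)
Your overall reduction is sound, and it is essentially the same skeleton as the paper's proof: the paper builds an intermediate exponent tableau $\bm{v}$ of shape $\lambda^J$ with $1/(L^{\bm{s}}M^{\bm{t}})=1/T^{\bm{v}}$ and then checks that $\bm{v}$ and $\bm{u}^J$ place the non-symmetrized indeterminates in the same boxes; your packaging via weight preservation plus the multiset observation is an equivalent way of saying that only the non-symmetrized slots need to be checked. Your first-column analysis (only the route of $w_1$ can reach column $1$; if it does, it enters at a row $\leq \lambda'_2+1$ and then descends straight down, shifting the tail $\ell_{i1}$, $i>\lambda'_2$, down by one box) is correct and matches the paper's case distinction $\alpha_1=1$ versus $\alpha_1\geq 2$.

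The genuine gap is your identification of the excluded exponents. The symmetrization set in Theorem \ref{thm:i1} contains only $t_1,\ldots,t_r$ among the $t$'s, so the excluded ones are $t_{r+1},\ldots,t_m$ --- \emph{all} of them, whenever $m>r$ --- not just $t_m$ in the single case $m=r+1$. (Note also that this set is fixed by the theorem statement and does not depend on $J$; moreover $m=r+1$ does not force $J=\{1,\ldots,r+1\}$, e.g.\ $\lambda=(2,2)$, $m=3$, $J=\{3,4,5\}$.) Consequently, for $m>r+1$ your proof never verifies condition (a) for $t_{r+1},\ldots,t_{m-1}$: on the right-hand side these are attached to the entries $\tau_{1,\alpha_k}$ with $\alpha_k>r$, and one must prove $\tau_{1,\alpha_k}=w_k$. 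This is true but requires the argument the paper gives: letting $l_0$ be the largest $l$ whose insertion bumps an entry, route ordering forces the insertions $w_{l_0+1},\ldots,w_m$ to be exactly those appended to row $1$, in order, at columns $r+1,\ldots,r+m-l_0$; hence $\alpha_k=r+k-l_0$ and $\tau_{1,\alpha_k}=w_k$ for all $k>l_0$, in particular for all $k>r$ since $l_0\leq r$. Relatedly, your justification even in the case you do treat --- ``$w_m$ is the largest inserted letter and is inserted last, so it is appended to the end of row $1$'' --- is not a valid inference: being the largest among the inserted letters does not prevent bumping an entry of $L$ (inserting $3$ into the one-box tableau with entry $5$ bumps the $5$). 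The correct reason is again route ordering: the box added in column $r+1$ lies in row $1$ because that column of $\lambda$ is empty, and a bumping route ending in row $1$ has length one, so the corresponding letter was appended without bumping. With these repairs (the $l_0$ argument covering all $t_k$, $k>r$), your proof closes and coincides with the paper's.
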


\begin{proof}
Assume that during the insertion $w_l$, the elements $x_{l,1}, x_{l,2}, x_{l,3}, \ldots, x_{l,a_l}$ are bumped from the boxes $(1,j_{l,1}), (2,j_{l,2}), (3,j_{l,3}), \ldots, (a_l, j_{l,a_l})$ respectively, and $x_{l,a_l}$ is placed in the box $(a_l+1,j_{l,a_l+1})$. If $w_l$ is directly placed at the end of the first row, we set $a_l=0$ and define $x_{l,0}=w_l$. The set of the boxes $(1,j_{l,1}), (2,j_{l,2}), (3,j_{l,3}), \ldots, (a_l, j_{l,a_l}), (a_l+1,j_{l,a_l+1})$ is called the \textit{bumping route} and denoted by $R_l$. 

By \cite[1.1]{7}, we have the following inequalities for all $l=1,\cdots,m$:
\[
  j_{l,1}\geq j_{l,2}\geq j_{l,3}\geq \cdots \geq j_{l,a_l+1}\label{j1}.
\]
In addition, the bumping route $R_{l-1}$ is strictly left to the bumping route $R_l$ for $l=2,\ldots,m$. That is, we have
\[
  j_{l-1,\eta}<j_{l,\eta}\label{j2}
\]
for all $\eta=1,\ldots, a_l+1$ by \cite[1.1]{7}. During the insertion process, the boxes $(a_1+1,j_{1,a_1+1}),\ldots,(a_m+1,j_{m,a_m+1})$ are added to the partition $\lambda$. Hence we have $j_{l,a_l+1}=\alpha_l$ for $l=1,\ldots,m$.

Let $\bm{v}$ denote the tableau over $\mathbb{C}$ of shape $\lambda^J$ constructed as follows:
\begin{enumerate}
  \item Place $t_l$ in the box $(1,j_{l,1})$ for $l=1,\ldots,m$.
  \item Place $s_{\eta-1,j_{l,\eta-1}}$ in the box $(\eta,j_{l,\eta})$ for $l=1,\ldots,m$ and $\eta=2,\ldots, a_l+1$.
  \item Place $s_{ij}$ in the box $(i,j)$ for $(i,j)\in D(\lambda)-(R_1\cup\cdots\cup R_m)$.
\end{enumerate}

From the construction of $\bm{v}$, we have $\displaystyle\frac{1}{L^{\bm{s}}M^{\bm{t}}}=\frac{1}{T^{\bm{v}}}$.

Next, we compare $\bm{v}$ and $\bm{u}^J(\bm{s},\bm{t})$. The shapes of the tableaux $\bm{v}$ and $\bm{u}^J(\bm{s},\bm{t})$ are identical. Let $S_l$ denote the set of the boxes $(1,\alpha_l), (2,\alpha_l), \ldots, (a_{l}+1,\alpha_l)$. If we write $S_l\cap R_l=\{(\eta_{l},\alpha_l),(\eta_l+1,\alpha_l),\ldots, (a_l+1,\alpha_l)\}$, we see that the entries of the $(i,\alpha_l)$ boxes in $\bm{v}$ and $\bm{u}^J(\bm{s},\bm{t})$ are equal to $s_{i-1,\alpha_l}$ for $i=\eta_l+1,\eta_l+2,\ldots,a_l+1$ by the construction of $\bm{v}$ and $\bm{u}^J(\bm{s},\bm{t})$. In particular, if $\alpha_l=1$, the entries of the $(i,1)$ boxes in $\bm{v}$ and $\bm{u}^J(\bm{s},\bm{t})$ are equal to $s_{(i-1)1}$ for $i=\lambda_2'+2,\lambda_2'+3,\ldots,\lambda_1'+1$. If $\alpha_1\geq 2$, the entries of the $(i,1)$ boxes in $\bm{v}$ and $\bm{u}^J(\bm{s},\bm{t})$ are equal to $s_{i1}$ for $i=1,2,\ldots,\lambda_1'$.

 Let $l_0$ be the largest index such that $a_l>0$. Then, $l_0\leq r$ and the entries of the $(1,j)$ boxes in $\bm{v}$ and $\bm{u}^J(\bm{s},\bm{t})$ are equal to $t_{j-r+l_0}$ for $j=r+1,\ldots,m+r-l_0$.

 If $l_0=r$, then $j_{l,\eta}=l$ for all $l=1,\ldots,r$ and $\eta=1,\ldots, a_l+1$. Hence the entries of the $(1,r)$ boxes in $\bm{v}$ and $\bm{u}^J(\bm{s},\bm{t})$ are equal to $t_r$. Thus, $\bm{v}$ and $\bm{u}^J(\bm{s},\bm{t})$ are equal up to the permutation of
\[
\{t_1,\ldots,t_{\min\{m,r-1\}},s_{11},\ldots,s_{{\lambda'}_2 1},s_{12},\ldots,s_{{\lambda'}_2 2},s_{13},\ldots,s_{{\lambda'}_3 3},\ldots,s_{1r},\ldots,s_{{\lambda'}_r r}\}.
\]

Therefore, the equality
\[
\sum_{sym} \frac{1}{L^{\bm{s}}M^{\bm{t}}}=\sum_{sym}\frac{1}{T^{\bm{v}}}=\sum_{sym}\frac{1}{T^{\bm{u}^J(\bm{s},\bm{t})}}
\]
holds.
\end{proof}

\begin{proof}[Proof of Theorem \ref{thm:i1}]
Let $N$ be an integer such that $N\geq l(\lambda)+1$. We have
\begin{align*}
\sum_{sym}\zeta_{\lambda}^{(N)}(\bm{s})\zeta_{(m)}^{(N)}(\bm{t})&=\sum_{sym}\sum_{L\otimes M\in \mathcal{B}_{\lambda}^{(N)}\otimes\mathcal{B}_{(m)}^{(N)}}\frac{1}{L^{\bm{s}}M^{\bm{t}}}\\
&=\sum_{sym}\sum_{J\in \mathcal{H}(\lambda,m)}\sum_{T\in \mathcal{B}_{\lambda^J}^{(N)}}\frac{1}{T^{\bm{u}^J(\bm{s},\bm{t})}} & \text{($\because$ Theorem \ref{thm:lrc} and Lemma \ref{thm:asa})} \\
&=\sum_{sym}\sum_{J\in\mathcal{H}(\lambda,m)} \zeta_{\lambda^J}^{(N)}(\bm{u}^J(\bm{s},\bm{t})).
\end{align*}

By Proposition \ref{thm:c1}, the sums $\displaystyle \sum_{L\in SSYT(\lambda)}\frac{1}{L^{\bm{s}}},\sum_{M\in SSYT((m))}\frac{1}{M^{\bm{t}}}$, and $\displaystyle\sum_{T\in SSYT(\lambda^J)}\frac{1}{T^{\bm{u}^J(\bm{s},\bm{t})}}$ are absolutely convergent. Hence taking the limit $N\to \infty$, we get the desired equality by Remark \ref{thm:ww}.
\end{proof}

\subsection{The Pieri formula for $E$-type}

In this subsection, we consider the summation of the products $\zeta_{\lambda}\cdot\zeta_{(1^n)}$ with swapped variables. The discussion in this subsection is parallel to that of previous subsection.

\begin{theorem}\label{thm:i2}
\textit{Let $\lambda$ be a partition of length $p$ and let $\bm{s}=
\begin{ytableau}
 s_1 \\
 s_2 \\
 \vdots \\
 s_n
\end{ytableau}
$
and
$\bm{t}\in T(\lambda,\mathbb{C})
$. Suppose that the real parts of all $t_{ij},s_k$ are greater than or equal to $1$ and real parts of 
\[
s_1,\ldots,s_{\min\{n,p-1\}},t_{11},\ldots,t_{1\lambda_2},t_{21},\ldots,t_{2\lambda_2},t_{31},\ldots,t_{3\lambda_3},\ldots,t_{p1},\ldots,t_{p\lambda_p},s_n,t_{1\lambda_1}
\]
are greater than $1$. Then, the following equality holds:
\[
\sum_{sym}\zeta_{(1^n)}(\bm{s})\zeta_{\lambda}(\bm{t})=\sum_{sym}\sum_{K\in \mathcal{E}(\lambda,n)}\zeta_{\lambda_K}(\bm{u}_K(\bm{s},\bm{t})).
\]
Here $\displaystyle\sum_{sym}$ means the summation over the permutation of
\[
\{s_1,\ldots,s_{\min\{n,p-1\}},t_{11},\ldots,t_{1\lambda_2 },t_{21},\ldots,t_{2\lambda_2},t_{31},\ldots,t_{3\lambda_3},\ldots,t_{p1},\ldots,t_{p\lambda_p}\}
\]
as indeterminates.
}
\end{theorem}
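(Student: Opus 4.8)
The plan is to run the proof of Theorem \ref{thm:i1} with the roles of rows and columns exchanged, replacing Schensted row insertion by column insertion. Concretely, I fix $N\geq l(\lambda)+1$ and work with the truncated functions $\zeta^{(N)}$ throughout, passing to the limit only at the end. Writing the product as a sum over the tensor product crystal,
\[
\sum_{sym}\zeta_{(1^n)}^{(N)}(\bm{s})\,\zeta_{\lambda}^{(N)}(\bm{t})=\sum_{sym}\sum_{L\otimes M\in\mathcal{B}_{(1^n)}^{(N)}\otimes\mathcal{B}_{\lambda}^{(N)}}\frac{1}{L^{\bm{s}}M^{\bm{t}}},
\]
I would then decompose the right-hand side via the crystal isomorphism of Theorem \ref{thm:lrc}. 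Since the left factor $(1^n)$ is a single column, the Littlewood-Richardson coefficients $c_{(1^n)\lambda}^{\kappa}$ equal $1$ when $\kappa/\lambda$ is a vertical $n$-strip and $0$ otherwise; the shapes that occur are therefore exactly the $\lambda_K$ with $K\in\mathcal{E}(\lambda,n)$, each with multiplicity one. This is the $E$-type counterpart of the Pieri decomposition used for Theorem \ref{thm:i1}.

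The heart of the matter is an $E$-type analogue of Lemma \ref{thm:asa}. Using the Remark following Theorem \ref{thm:lrc}, the image $T$ of $L\otimes M$ can be written as $u_1\to(\cdots\to(u_n\to M)\cdots)$, where $u_1\cdots u_n$ is the reading word of the column $L$; that is, $T$ is obtained by column-inserting the entries of $L$ into $M$ in increasing order. I would then prove: if $L\in\mathcal{B}_{(1^n)}^{(N)}$, $M\in\mathcal{B}_{\lambda}^{(N)}$, and $T$ has shape $\lambda_K$ for $K=\{\beta_1<\cdots<\beta_n\}\in\mathcal{E}(\lambda,n)$, then $\sum_{sym}\frac{1}{L^{\bm{s}}M^{\bm{t}}}=\sum_{sym}\frac{1}{T^{\bm{u}_K}}$. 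The argument transposes that of Lemma \ref{thm:asa}: for the $l$-th column insertion I record the bumping route $R_l$, which now travels to the right with weakly decreasing row indices, and by the column-insertion form of the facts in \cite[1.1]{7} the routes lie strictly below one another for successive insertions, so the added boxes are exactly the corners of the vertical strip $\lambda_K/\lambda$, with $R_l$ adding a box in row $\beta_l$.

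Building an auxiliary tableau $\bm{v}$ of shape $\lambda_K$ then finishes the lemma: I place $s_l$ at the head of $R_l$ (which lies in the first column), slide the bumped entries $t_{ij}$ one step to the right along each route, and leave all other entries fixed. By construction each value in $T$ is paired with the exponent it originally carried, so $\frac{1}{L^{\bm{s}}M^{\bm{t}}}=\frac{1}{T^{\bm{v}}}$, and a box-by-box comparison shows that $\bm{v}$ and $\bm{u}_K$ of Definition \ref{thm:gak1} agree up to a permutation of the symmetrized variables (the $s_l$ sit in the first column, as does each $s_k$ in $\bm{u}_K$, while the first-row and left-column bookkeeping matches after symmetrization).

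With the lemma in hand, summing over $\mathcal{B}_{(1^n)}^{(N)}\otimes\mathcal{B}_{\lambda}^{(N)}$ and regrouping by output shape yields
\[
\sum_{sym}\zeta_{(1^n)}^{(N)}(\bm{s})\,\zeta_{\lambda}^{(N)}(\bm{t})=\sum_{sym}\sum_{K\in\mathcal{E}(\lambda,n)}\zeta_{\lambda_K}^{(N)}(\bm{u}_K),
\]
and letting $N\to\infty$ via Remark \ref{thm:ww} delivers the stated identity. I expect the main obstacle to be the insertion lemma, and within it the bookkeeping at the boundary: reconciling $\bm{v}$ with $\bm{u}_K$ requires care exactly where $\bm{u}_K$ inserts $s_k$ at $(\beta_k,1)$ and shifts the row, whereas column insertion appends the new box at the right end of row $\beta_k$. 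The strict real-part hypotheses (in particular those attached to $s_n$ and $t_{1\lambda_1}$) are what guarantee, through Proposition \ref{thm:c1}, that all Schur multiple zeta functions in sight converge absolutely, legitimizing both the rearrangements implicit in the symmetrization and the passage $N\to\infty$, just as in the $H$-type case.
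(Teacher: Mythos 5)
Your proposal is correct and is essentially the paper's own proof: the paper likewise decomposes $\mathcal{B}_{(1^n)}^{(N)}\otimes\mathcal{B}_{\lambda}^{(N)}$ via Theorem \ref{thm:lrc} together with the remark on column insertion, proves exactly your $E$-type bumping-route lemma (its Lemma \ref{thm:hru}, the transpose of Lemma \ref{thm:asa}, with the same auxiliary tableau and box-by-box comparison with $\bm{u}_K$), and then passes to the limit $N\to\infty$ using Proposition \ref{thm:c1} and Remark \ref{thm:ww}. The only cosmetic difference is that the paper takes $N\geq l(\lambda)+n$ rather than $N\geq l(\lambda)+1$, so that every shape $\lambda_K$ with $K\in\mathcal{E}(\lambda,n)$ genuinely occurs in the finite-$N$ decomposition; with your weaker bound the shapes with $l(\lambda_K)>N$ do not occur, but since $\mathcal{B}_{\lambda_K}^{(N)}$ is then empty and the corresponding truncated zeta function is an empty sum, your displayed finite-$N$ identity still holds verbatim.
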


To prove Theorem \ref{thm:i2}, we need the following lemma:

\begin{lemma}\label{thm:hru}
\textit{We keep the setting of the above theorem. We assume that $N\geq l(\lambda)+n$. Let $L=\begin{ytableau}
u_1 \\
\vdots \\
u_n
\end{ytableau}
\in \mathcal{B}_{(1^n)}^{(N)}$ and
$M\in \mathcal{B}_{\lambda}^{(N)}$. We assume that the shape of the tableau 
\[
u_n\to (\cdots \to (u_1 \to M)\cdots)
\]
is $\lambda_K$, where $K=\{\beta_1,\ldots,\beta_n\}\;(\beta_1<\cdots<\beta_n)$ is an element of $\mathcal{E}(\lambda,n)$. Then 
\[\sum_{sym} \frac{1}{L^{\bm{s}}M^{\bm{t}}}=\sum_{sym}\frac{1}{T^{\bm{u}_K(\bm{s},\bm{t})}}.
\]}
\end{lemma}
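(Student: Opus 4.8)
The plan is to mirror the proof of Lemma \ref{thm:asa}, but with rows and columns interchanged, since the $E$-type situation is the conjugate of the $H$-type one. Here we insert the single-column tableau $L=(u_1,\ldots,u_n)^T$ into $M$ via the sequence of \emph{column} insertions $u_n\to(\cdots\to(u_1\to M)\cdots)$. Since $u_1\le\cdots\le u_n$ (as $L$ is semistandard of column shape), each successive column insertion starts at or to the right of the previous one, and the bumping routes will be weakly decreasing in row index and strictly separated; the added boxes land in rows $\beta_1<\cdots<\beta_n$, which are exactly the elements of $K$.

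\emph{First I would} set up the bumping data for column insertion. For each insertion $u_l$ I would record the sequence of entries $y_{l,1},y_{l,2},\ldots$ bumped from successive \emph{columns}, occupying boxes $(i_{l,1},1),(i_{l,2},2),\ldots$ (a column-bumping route $R_l$), with the final bumped element creating the new box in column $a_l+1$, row $\beta_l$. By the column-insertion analogue of \cite[1.1]{7}, within a single route the column indices strictly increase while the row indices weakly decrease, and consecutive routes $R_{l-1},R_l$ are strictly separated (with $R_{l-1}$ strictly \emph{above} $R_l$ in each column), so the new boxes occur in the distinct rows $\beta_l$. \emph{Next I would} build an intermediate tableau $\bm{v}$ of shape $\lambda_K$ by transporting the entries along the bumping routes: put $s_l$ into the box $(i_{l,1},1)$ where route $R_l$ begins, put $t_{j-1,i_{l,j-1}}$ (the bumped entry) into the box $(i_{l,j},j)$ reached next along $R_l$, and leave $t_{ij}$ in place for boxes untouched by any route. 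By construction $1/(L^{\bm{s}}M^{\bm{t}})=1/T^{\bm{v}}$ term by term, exactly as in Lemma \ref{thm:asa}.

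\emph{Then I would} compare $\bm{v}$ with the target filling $\bm{u}_K$ from Definition \ref{thm:gak1}. Both have shape $\lambda_K$, and the comparison reduces to checking that the multiset of entries placed in each box of $\bm{v}$ agrees with that of $\bm{u}_K$ up to a permutation of the indeterminate set $\{s_1,\ldots,s_s,t_{11},\ldots,t_{s\lambda_s}\}$. Along each route the entries $t_{i-1,\,i_{l,j-1}}$ match the ``pushed-down-a-column'' entries of $\bm{u}_K$ in the interior, and at the boundary I would identify the first-column entries: if $\beta_1=1$ the leftmost column entries shift analogously to the $H$-type argument (with roles of $s$ and $t$ and of rows and columns swapped), and the rightmost box in the top row carries the appropriate $t_{1\lambda_1}$. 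Writing $l_0$ for the largest $l$ with $a_l>0$ gives the book-keeping that the newly placed $s_l$'s land in the first-column boxes of rows $\beta_l$, matching $\bm{u}_K$.

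\emph{The hard part will be} the boundary book-keeping in the comparison of $\bm{v}$ and $\bm{u}_K$: tracking precisely which indices $t_{ij}$ get shifted by one column along the separated routes and verifying that the leftover (unshifted) entries together with the inserted $s_l$'s reassemble, as a multiset, into exactly $\bm{u}_K$'s filling up to the permutation of the stated indeterminate set. This is where the strict separation of routes and the weakly-decreasing row indices must be used carefully, since an off-by-one in the column index would spoil the matching. Once $\bm{v}$ and $\bm{u}_K$ are shown equal up to permutation of the indeterminates, the desired equality $\sum_{sym}1/(L^{\bm{s}}M^{\bm{t}})=\sum_{sym}1/T^{\bm{v}}=\sum_{sym}1/T^{\bm{u}_K}$ follows immediately, and this lemma then feeds into the proof of Theorem \ref{thm:i2} via Theorem \ref{thm:lrc} in the same way Lemma \ref{thm:asa} feeds Theorem \ref{thm:i1}.
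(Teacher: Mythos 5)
Your proposal follows essentially the same route as the paper's own proof: set up the column-insertion bumping routes $R_l$ with weakly decreasing row indices and strictly separated consecutive routes (so the new boxes land in rows $\beta_1<\cdots<\beta_n$), transport the variables along the routes to build an intermediate filling $\bm{v}$ (the paper's $\bm{w}$) of shape $\lambda_K$ with $1/(L^{\bm{s}}M^{\bm{t}})=1/T^{\bm{v}}$, and then check that $\bm{v}$ and $\bm{u}_K$ agree up to a permutation of the stated indeterminate set (including the $\beta_1=1$ versus $\beta_1\geq 2$ boundary cases and the $l_0$ bookkeeping), which is exactly the paper's argument with rows and columns exchanged relative to Lemma \ref{thm:asa}. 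The occasional transposed indices (e.g.\ writing $t_{j-1,i_{l,j-1}}$ for $t_{i_{l,j-1},j-1}$, or ``leftmost column'' where the top row is meant) and the loose claim that the $s_l$ land in rows $\beta_l$ (true only for the trivial insertions $l>l_0$, which is where it is actually needed, since $s_1,\ldots,s_{l_0}$ lie in the permuted set) are slips of notation, not of substance.
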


\begin{proof}
The proof is similar to that of Lemma \ref{thm:asa}.

Suppose that during the insertion $u_l$, the elements $y_{l,1}, y_{l,2}, y_{l,3}, \ldots, y_{l,b_l}$ are bumped from boxes $(i_{l,1},1), (i_{l,2},2), (i_{l,3},3), \ldots, (i_{l,b_l},b_l)$ respectively, and $y_{l,b_l}$ is placed at $(i_{l,b_l+1},b_l+1)$. Here we understand that $b_l=0$ if $u_l$ is placed at the end of the first column and we set $y_{l,0}=u_l$. The set of the boxes $(i_{l,1},1), (i_{l,2},2), (i_{l,3},3), \ldots, (i_{l,b_l},b_l), (i_{l,b_l+1},b_l+1)$ is called the \textit{bumping route} and denoted by $R_l$.

We have
\[
i_{l,1}\geq i_{l,2}\geq i_{l,3}\geq \cdots \geq i_{l,b_l+1}
\]
for all $l=1,\cdots,n$ by \cite[A.2]{7}. In addition, the bumping route $R_{l-1}$ is strictly above the bumping route $R_l$ for $l=2,\ldots,n$, that is we have 
\[
i_{l-1,\eta}<i_{l,\eta}
\]
for $\eta=1,\ldots, b_l+1$ by \cite[A.2]{7}. During the insertion, the boxes $(i_{1,b_1+1},b_1+1),\ldots,(i_{n,b_n+1},b_n+1)$ are added to $\lambda$. Hence we have $i_{l,b_l+1}=\beta_l$ for $l=1,\ldots,n$.

Let $\bm{w}$ denote the tableau over $\mathbb{C}$ of shape $\lambda_K$ defined by putting $s_l$ into the $(i_{l,1},1)$ box for $l=1,\ldots,n$, putting $t_{i_{l,\eta-1},\eta-1}$ into the $(i_{l,\eta},\eta)$ box for $l=1,\ldots,n$ and $\eta=2,\ldots, b_l+1$, and putting $t_{ij}$ into the $(i,j)$ box for $D(\lambda)-(R_1\cup\cdots\cup R_n)$. From the construction of $\bm{w}$, we have $\displaystyle\frac{1}{L^{\bm{s}}M^{\bm{t}}}=\frac{1}{T^{\bm{w}}}$.

We compare $\bm{w}$ and $\bm{u}_K(\bm{s},\bm{t})$. The shape of the tableau $\bm{w}$ is equal to that of the tableau $\bm{u}_K(\bm{s},\bm{t})$. Let $S_l$ denote the set of the boxes $(\beta_l,1), (\beta_l,2), \ldots, (\beta_l,b_l+1)$. If we write $S_l\cap R_l=\{(\beta_l,\eta_l),(\beta_l,\eta_l+1),\ldots, (\beta_l,b_l+1)\}$, then the entries of the $(\beta_l,j)$ boxes in $\bm{v}$ and $\bm{u}_K(\bm{s},\bm{t})$ are equal to $t_{\beta_l,j-1}$ for $j=\eta_l+1,\eta_l+2,\ldots,b_l+1$ by the construction of $\bm{w}$ and $\bm{u}_K(\bm{s},\bm{t})$. In particular, if $\beta_1=1$, then the entries of the $(1,j)$ boxes in $\bm{w}$ and $\bm{u}_K(\bm{s},\bm{t})$ are equal to $t_{1,j-1}$ for $j=\lambda_2+2,\lambda_2+3,\ldots,\lambda_1+1$. If $\beta_1\geq 2$, then the entries of the $(1,j)$ boxes in $\bm{w}$ and $\bm{u}_K(\bm{s},\bm{t})$ are equal to $t_{1j}$ for $j=1,2,\ldots,\lambda_1$.

We write the largest $l$ such that $b_l>0$ by $l_0$. Then, $l_0\leq p$ and the entries of the $(i,1)$ boxes in $\bm{w}$ and $\bm{u}_K(\bm{s},\bm{t})$ are equal to $s_{i-p+l_0}$ for $i=p+1,\ldots,n+p-l_0$.

If $l_0=p$, then $i_{l,\eta}=l$ for all $l=1,\ldots,p$ and $\eta=1,\ldots, b_l+1$. Hence the entries of the $(p,1)$ boxes in $\bm{w}$ and $\bm{u}_K(\bm{s},\bm{t})$ are equal to $s_p$. Thus, $\bm{w}$ and $\bm{u}_K(\bm{s},\bm{t})$ are equal up to the permutation of

\[
\{s_1,\ldots,s_{\min\{n,p-1\}},t_{11},\ldots,t_{1\lambda_2 },t_{21},\ldots,t_{2\lambda_2},t_{31},\ldots,t_{3\lambda_3},\ldots,t_{p1},\ldots,t_{p\lambda_p}\}.
\]

Therefore, the equality
\[
\sum_{sym} \frac{1}{L^{\bm{s}}M^{\bm{t}}}=\sum_{sym}\frac{1}{T^{\bm{w}}}
=\sum_{sym}\frac{1}{T^{\bm{u}_K(\bm{s},\bm{t})}}
\]
holds.
\end{proof}

\begin{proof}[Proof of Theorem \ref{thm:i2}]
Let $N$ be an integer such that $N\geq l(\lambda)+n$. We have
\begin{align*}
\sum_{sym}\zeta_{(1^n)}^{(N)}(\bm{s})\zeta_{\lambda}^{(N)}(\bm{t})&=\sum_{sym}\sum_{L\otimes M\in \mathcal{B}_{(1^n)}^{(N)}\otimes\mathcal{B}_{\lambda}^{(N)}}\frac{1}{L^{\bm{s}}M^{\bm{t}}}\\
&=\sum_{sym}\sum_{K\in \mathcal{E}(\lambda,n)}\sum_{T\in \mathcal{B}_{\lambda_K}^{(N)}}\frac{1}{T^{\bm{u}_K(\bm{s},\bm{t})}} & \text{($\because$ Theorem \ref{thm:lrc} and Lemma \ref{thm:hru})}\\
&=\sum_{sym}\sum_{K\in\mathcal{E}(\lambda,n)} \zeta_{\lambda_K}^{(N)}(\bm{u}_K(\bm{s},\bm{t})).
\end{align*}

By Proposition \ref{thm:c1}, the sums $\displaystyle \sum_{L\in SSYT((1)^n)}\frac{1}{L^{\bm{s}}},\sum_{M\in SSYT(\lambda)}\frac{1}{M^{\bm{t}}}$ and $\displaystyle\sum_{T\in SSYT(\lambda_K)}\frac{1}{T^{\bm{u}_K(\bm{s},\bm{t})}}$ are absolutely convergent. Taking the limit $N\to \infty$, we get the desired equality by Remark \ref{thm:ww}.
\end{proof}

\section{Littlewood-Richardson rule}

Keep the setting of the previous section.

If we allow the permutation of all the indeterminates, we can generalize the results in Section 4 to the Littlewood-Richardson rule.

\begin{theorem}\label{thm:lr}
\textit{Let $\mu$ and $\nu$ be partitions and let $\bm{s}\in T(\mu,\mathbb{C})$ and $\bm{t}\in T(\nu,\mathbb{C})$. Suppose that
the real parts of all $t_{ij}$ , $s_{kl}$ are greater than 1. For a skew partition $\lambda/\mu$ such that $|\lambda/\mu|=|\nu|$, we fix a filling $\bm{u}_{\lambda}(\bm{s},\bm{t})$ of $D(\lambda)$ with $\{s_{ij} \mid (i,j)\in D(\mu)\}\cup\{t_{kl} \mid (k,l)\in D(\nu)\}$ (as indeterminates). Then, the following holds:
\[
\sum_{sym}\zeta_{\mu}(\bm{s})\zeta_{\nu}(\bm{t})=\sum_{sym}\sum_{\lambda}c_{\mu\nu}^{\lambda}\zeta_{\lambda}(\bm{u}_{\lambda}(\bm{s},\bm{t})).
\]
Here $\displaystyle\sum_{sym}$ means the summation over the permutation of $\{s_{ij} \mid (i,j)\in D(\mu)\}\cup\{t_{kl} \mid (k,l)\in D(\nu)\}$ 
as indeterminates.
}

\end{theorem}

\begin{proof}
Let $N$ be an integer such that $N\geq l(\mu)$ and $N\geq l(\nu)$.
Suppose that $L\otimes M$ corresponds to $T$ under the isomorphism $\displaystyle\mathcal{B}_{\mu}^{(N)}\otimes \mathcal{B}_{\nu}^{(N)}\to \bigoplus_{l(\lambda)\leq N}(\mathcal{B}_{\lambda}^{(N)})^{\oplus c_{\mu\nu}^{\lambda}}$. Since $\wt(L\otimes M)=\wt(T)$, we have
\[
\sum_{sym}\frac{1}{L^{\bm{s}}M^{\bm{t}}}=\sum_{sym}\frac{1}{T^{\bm{u}_{\lambda}(\bm{s},\bm{t})}}.
\]
Therefore we have
\begin{align*}
\sum_{sym}\zeta_{\mu}^{(N)}(\bm{s})\zeta_{\nu}^{(N)}(\bm{t})&=\sum_{sym}\sum_{L\otimes N\in \mathcal{B}_{\mu}^{(N)}\otimes \mathcal{B}_{\nu}^{(N)}}\frac{1}{L^{\bm{s}}M^{\bm{t}}}\\
&=\sum_{sym}\sum_{l(\lambda)\leq N}\sum_{T\in\mathcal{B}_{\lambda}^{(N)}}c_{\mu\nu}^{\lambda}\frac{1}{T^{\bm{u}_{\lambda}(\bm{s},\bm{t})}}& \text{($\because$ Theorem \ref{thm:lrc})}\\
&=\sum_{sym}\sum_{l(\lambda)\leq N}c_{\mu\nu}^{\lambda}\zeta_{\lambda}^{(N)}(\bm{u}_{\lambda}(\bm{s},\bm{t})).
\end{align*}

By Proposition \ref{thm:c1}, the sums $\displaystyle \sum_{L\in SSYT(\mu)}\frac{1}{L^{\bm{s}}},\sum_{M\in SSYT(\nu)}\frac{1}{M^{\bm{t}}}$ and $\displaystyle\sum_{T\in SSYT(\lambda)}\frac{1}{T^{\bm{u}_K(\bm{s},\bm{t})}}$ are absolutely convergent. Taking the limit $N\to \infty$, we get the desired equality by Remark \ref{thm:ww}.
\end{proof}

\section{Acknowledgment}
The author would like to thank his supervisor Syu Kato for his helpful advice and continuous encouragement.

\end{document}